\newtheorem{proposition}{Proposition}
\newtheorem{corollary}{Corollary}
\newtheorem{theorem}{Theorem}
\newtheorem{lemma}{Lemma}
\theoremstyle{definition}
\newtheorem{assumption}{Assumption}
\newtheorem{condition}{Condition}
\newtheorem{definition}{Definition}
\newtheorem{remark}{Remark}
\newcommand{\ind}[1]{\mathbbm{1}_{\left\{#1\right\}}}
\newcommand{\norm}[1]{\left|\left|#1\right|\right|}
\newcommand{\floor}[1]{\left\lfloor#1\right\rfloor}
\newcommand{\map}[3]{#1 : #2 \longrightarrow #3}
\newcommand{\set}[2]{\left\{#1 : #2\right\}}
\newcommand{\vct}[2]{\left(#1 : #2\right)}
\newcommand{\defeq}{\vcentcolon=}
\newcommand{\eqdef}{=\vcentcolon}
\newcommand{\bw}{\boldsymbol{w}}
\newcommand{\bx}{\boldsymbol{x}}
\newcommand{\by}{\boldsymbol{y}}
\newcommand{\bz}{\boldsymbol{z}}
\newcommand{\bA}{\boldsymbol{A}}
\newcommand{\bS}{\boldsymbol{S}}
\newcommand{\calK}{\mathcal{K}}
\newcommand{\calL}{\mathcal{L}}
\newcommand{\scdot}{{}\cdot{}}
\newcommand{\N}{\mathbbm{N}}
\newcommand{\R}{\mathbbm{R}}
\newcommand{\Z}{\mathbbm{Z}}
\newcommand{\e}{\mathrm{e}}
\newcommand{\condp}{P\probarg}
\newcommand{\conde}{E\expectarg}
\newcommand{\condr}[1]{#1\probarg}
\newcommand{\conds}[1]{#1\expectarg}
\DeclarePairedDelimiterX{\probarg}[1]{(}{)}{%
	\ifnum\currentgrouptype=16 \else\begingroup\fi
	\activatebar#1
	\ifnum\currentgrouptype=16 \else\endgroup\fi
}
\DeclarePairedDelimiterX{\expectarg}[1]{[}{]}{%
	\ifnum\currentgrouptype=16 \else\begingroup\fi
	\activatebar#1
	\ifnum\currentgrouptype=16 \else\endgroup\fi
}
\newcommand{\innermid}{\nonscript\;\delimsize\vert\nonscript\;}
\newcommand{\activatebar}{%
	\begingroup\lccode`\~=`\|
	\lowercase{\endgroup\let~}\innermid 
	\mathcode`|=\string"8000
}
\pgfplotsset{
	compat = 1.16,
	ticklabel style = {font = \footnotesize},
	every axis/.append style = {
		grid style = {dashed, gray, opacity = 0.2},
		label style = {font = \footnotesize}, 
		width = \columnwidth,
		height = 0.618 * 1 * \columnwidth
	}
}
\definecolor{britishracinggreen}{rgb}{0.0, 0.26, 0.15}
\definecolor{bostonuniversityred}{rgb}{0.8, 0.0, 0.0}
\definecolor{ceruleanblue}{rgb}{0.16, 0.32, 0.75}
\definecolor{airforceblue}{rgb}{0.36, 0.54, 0.66}
\definecolor{cadmiumgreen}{rgb}{0.0, 0.42, 0.24}
\definecolor{ao(english)}{rgb}{0.0, 0.5, 0.0}
\definecolor{coolblack}{rgb}{0.0, 0.18, 0.39}
\definecolor{byzantine}{rgb}{0.74, 0.2, 0.64}
\definecolor{alizarin}{rgb}{0.82, 0.1, 0.26}
\definecolor{arsenic}{rgb}{0.23, 0.27, 0.29}
\definecolor{cobalt}{rgb}{0.0, 0.28, 0.67}
\definecolor{amber}{rgb}{1.0, 0.75, 0.0}
\title{Asymptotically Optimal Policies for\\Weakly Coupled Markov Decision Processes \vspace{\baselineskip}}
\author{
\begin{tabular}{ccc}
	\normalsize Diego Goldsztajn & \hspace{1cm} & \normalsize Konstantin Avrachenkov \\
	\footnotesize Universidad ORT Uruguay & \hspace{1cm} & \footnotesize Inria \\
	\footnotesize Montevideo, Uruguay & \hspace{1cm} & \footnotesize Sophia Antipolis, France \\
	\scriptsize\texttt{goldsztajn@ort.edu.uy} & \hspace{1cm} &\scriptsize\texttt{konstantin.avrachenkov@inria.fr} \\
\end{tabular}
}
\date{\vspace{\baselineskip} January 23, 2026}
\begin{document}

	
\maketitle

\noindent\rule{\textwidth}{1pt}

\vspace{0.3\baselineskip}

\onehalfspacing

\begin{adjustwidth}{0.8cm}{0.8cm}
	\begin{center}
		\textbf{Abstract}
	\end{center}
	
	\vspace{0.2\baselineskip}
	
	\noindent We consider the problem of maximizing the expected average reward obtained over an infinite time horizon by $n$ weakly coupled Markov decision processes. Our setup is a substantial generalization of the multi-armed restless bandit problem that allows for multiple actions and constraints. We establish a connection with a deterministic and continuous-variable control problem where the objective is to maximize the average reward derived from an occupancy measure that represents the empirical distribution of the processes when $n \to \infty$. We show that a solution of this fluid problem can be used to construct policies for the weakly coupled processes that achieve the maximum expected average reward as $n \to \infty$, and we give sufficient conditions for the existence of solutions. Under certain assumptions on the constraints, we prove that these conditions are automatically satisfied if the unconstrained single-process problem admits a suitable unichain and aperiodic policy. In particular, the assumptions include multi-armed restless bandits and a broad class of problems with multiple actions and inequality constraints. Also, the policies can be constructed in an explicit way in these cases. Our theoretical results are complemented by several concrete examples and numerical experiments, which include multichain setups that are covered by the theoretical results.
	
	\vspace{0.2\baselineskip}
	
	\small{\noindent \textit{Key words:} weakly coupled Markov decision processes, mean-field limits.}
	
	\vspace{0.2\baselineskip}
	
	\small\noindent Most of this work was carried out while Diego Goldsztajn was with Inria. This research was partially supported by ANII through the fellowship  PD\_NAC\_2024\_1\_182118.
	
	\vspace{0.2\baselineskip}
	
	\small\noindent This paper has been accepted at the Journal of Applied Probability.  
\end{adjustwidth}

\newpage

\section{Introduction}
\label{sec: introduction}

Consider $n$ identical discrete-time Markov decision processes with finite state and action spaces. Given the states and actions for all the processes, the states reached at the next time step are mutually independent. Nevertheless, the processes are not independent since there exist linear constraints that couple the selection of actions across the processes. The objective is to maximize the expected average reward obtained over an infinite time horizon by selecting the actions in a suitable way that respects the constraints.

The latter setup can be used to model a wide range of applications, including the healthcare, queueing, supply chain and scheduling problems considered in \cite{gocgun2012lagrangian,hawkins2003langrangian,meuleau1998solving,patrick2008dynamic,salemi2018approximate}. The processes typically represent independent agents, projects or tasks which use shared resources of different types. Selecting an action for a process determines the amount of resources of each type that the process will use over the current time step, as well as the average reward that the process will produce. The constraints can be used to impose conditions on the total amount of each resource that should be used at each time step; e.g., the total amount of resource used may not exceed the amount that is available.

The set of weakly coupled Markov decision processes can be regarded as a single Markov decision process without any constraints, where the constraints are implicitly imposed in the definition of the action space associated with each state. Thus, an optimal policy for the weakly coupled processes can be obtained by leveraging dynamic programming techniques. However, the optimal policy hardly ever admits a closed form expression, and the time required to compute the policy numerically can often be exceedingly large as it depends exponentially on the number of processes $n$. This creates a strong incentive for studying simple policies that may not be optimal but become optimal as $n \to \infty$.

While weakly coupled processes have been considered before, the focus has been on problems with finite time horizons or multi-armed restless bandits where only two actions are possible and there is a single constraint with a specific structure. In particular, we are not aware of any previous results for problems with multiple actions and constraints where the goal is to maximize the expected average reward over an infinite time horizon. Our results concern this class of problems and hold under weak assumptions on the transition probability matrices of the processes. These assumptions are provably weaker than those in all previous papers on multi-armed restless bandits, perhaps with the exception of \cite{hong2023restless}. Although we do not prove that the assumptions in \cite{hong2023restless} are stronger, we provide examples where our approach yields asymptotically optimal policies and that in \cite{hong2023restless} does not.

\subsection{Overview of the main results}
\label{sub: overview of the main results}

Using standard arguments, we establish that a linear program yields an upper bound for the expected average reward that any policy can provide. Then we prove that a solution of a deterministic and continuous-variable control problem can be used to construct policies for the weakly coupled processes that achieve this upper bound in the limit as $n \to \infty$. We further analyze this fluid problem and derive sufficient conditions for the existence of solutions, as well as structural properties that help to construct a solution.

Under certain assumptions on the constraints of the problem, we show that the latter conditions are automatically satisfied if the unconstrained single-process problem admits a suitable unichain\footnote{A Markov chain is unichain if its state space can be partitioned into a single recurrent class and a possibly empty set of transient states. A policy is unichain if the associated Markov chain is unichain, and a Markov decision process is unichain if every stationary deterministic policy is unichain.} and aperiodic policy; such a policy may exist even if the single-process problem is multichain. The assumptions include multi-armed restless bandit problems and a broad class of problems with multiple actions and inequality constraints. In both of these cases we provide a simple procedure for constructing asymptotically optimal policies, which we apply to concrete examples and evaluate numerically.

\subsubsection{Mean-field limit and asymptotic optimality}
\label{subsub: asymptotic optimality results}

An important characteristic of the weakly coupled processes considered here is that processes in the same state react to the same action in an identically distributed way. In view of this property, we describe the states of all the processes through a occupancy measure or state frequency vector that reflects the empirical distribution of the states. Moreover, we focus our search for asymptotically optimal policies on the class of policies which are defined by mappings that assign to each state frequency vector a state-action frequency vector, i.e., a vector that for each state and action indicates the fraction of processes that are in the given state and for which the given action is selected.

Such a mapping determines a transition kernel that governs the evolution of the state frequency vector as a discrete-time Markov chain. Assuming that the mapping converges uniformly as $n \to \infty$ to a continuous function that we call fluid control, we show that the latter stochastic process converges weakly to a nearly deterministic process dubbed fluid trajectory. More precisely, a fluid trajectory has a possibly random initial condition but evolves over time in a deterministic manner, as a discrete-time dynamical system that is governed by the fluid control. The values taken by a fluid trajectory are occupancy measures in a probability simplex and applying the fluid control to any such measure yields a distribution of state-action pairs which satisfies the constraints of the problem, because the fluid control is the limit of mappings which satisfy the constraints.

The latter mean-field limit connects the weakly coupled Markov decision processes with the above-mentioned fluid problem. Specifically, the fluid problem is to find a continuous fluid control such that the average reward of any fluid trajectory converges to the upper bound provided by the linear program. Assuming that such a fluid control exists, we use an interchange of limits argument to prove that asymptotically optimal policies can be obtained by constructing mappings that converge uniformly to the fluid control. 

It follows from the latter result that the problem of finding asymptotically optimal policies can be decomposed into the following subproblems. First, find a fluid control that solves the fluid problem. Second, construct a discrete approximation of the fluid control for each $n$. In the case of multi-armed restless bandit problems or the class of problems with multiple actions and inequality constraints mentioned earlier, we establish that the second step can be carried out following a straightforward rounding procedure.

\subsubsection{Structure of optimal fluid controls}
\label{subsub: structure of fluid controls}

In order to find suitable fluid controls, we fix an occupancy measure that solves the linear program and search for continuous fluid controls such that all fluid trajectories converge to this optimal occupancy measure. For this restricted version of the fluid problem, we derive a necessary and sufficient condition for the existence of solutions. The condition involves the set of occupancy measures that assign zero mass to some state in the support of the optimal occupancy measure; this set is contained in the boundary of the simplex. We prove that a solution of the restricted fluid problem exists if and only if there exists a fluid control such that no fluid trajectory is contained in the latter set. Furthermore, we give an explicit expression for a solution using the latter auxiliary fluid control.

Loosely speaking, we solve the restricted fluid problem by expressing every occupancy measure as the sum of two nonnegative measures, such that the first measure is a multiple of the optimal occupancy measure with maximal total mass. The solution of the restricted fluid problem is the fluid control that at each time step keeps the first measure unchanged and applies the auxiliary fluid control to the second measure. The property of the auxiliary fluid control ensures that the first measure absorbs mass from the second measure over time. As a result, all fluid trajectories converge to the optimal occupancy measure.

The latter global attractivity result reduces the problem of finding an optimal fluid control to that of finding an auxiliary fluid control. For problems with general constraints, we consider auxiliary fluid controls that can be expressed as a convex combination of two other fluid controls. The first one is based on a policy for the unconstrained single-process problem, whereas the purpose of the second one is to ensure that the convex combination complies with the constraints. We prove that the convex combination yields an auxiliary fluid control, with the desired property, if the policy for the unconstrained single-process problem is unichain, aperiodic and such that the unique irreducible class contains all the states that are in the support of the optimal occupancy measure.

It is difficult to provide an explicit expression for the auxiliary fluid control without imposing some assumptions on the structure of the constraints, because the fluid control must comply with the constraints. We obtain explicit expressions in two important cases: multi-armed restless bandits and a class of problems with multiple actions and inequality constraints. The latter inequality constraints must have nonnegative coefficients, which can be interpreted as the amounts of resources that are consumed by the different actions. In addition, there must exist an action that does not consume any resources. Applying such an action to a process does not contribute to violating any of the resource consumption constraints, which makes it easy to obtain feasible policies.


\subsection{Related work}
\label{sub: related work}

As noted earlier, the study of weakly coupled Markov decision processes with multiple actions and constraints has focused on scenarios with finite time horizons. In this context, several papers have studied linear programming and Lagrange decomposition techniques for obtaining approximate solutions and bounds; e.g., as in \cite{adelman2008relaxations,gocgun2012lagrangian,hawkins2003langrangian}. Recently, \cite{gast2024reoptimization,brown2023fluid,brown2022dynamic} have obtained asymptotic optimality results and performance bounds that depend on the number of processes and the time horizon; here the formulation in \cite{brown2022dynamic} allows for exogenous signals that affect the constraints, rewards and transition probabilities. Similar asymptotic optimality results and performance bounds have been established in the more restrictive scenario where there is a single constraint. In this setting, \cite{brown2020index,zhang2021restless,gast2023linear} have considered problems with two actions, whereas \cite{zayas2019asymptotically,xiong2021reinforcement} have studied problems with more than two actions. It is worth noting that the setups considered in \cite{brown2020index,brown2022dynamic,brown2023fluid} allow for heterogeneous processes, as well as time-dependent constraints, rewards and transition probabilities.

The problems studied in the above papers are substantially different from the ones considered here, where rewards are averaged over an infinite time horizon. In particular, the initial state and the length of the time horizon affect the optimal value and policy of a problem with a finite time horizon. In contrast, the expected average reward over an infinite time horizon is often not affected by the initial state, and the length of the time horizon is meaningless. Further, the structure of the transition kernels plays an important role in problems with the expected average reward criterion but is not particularly relevant when the time horizon is finite. Similar remarks apply to problems with an infinite time horizon and the discounted reward criterion, which can be regarded as problems with a finite but random time horizon. Such problems have been analyzed in \cite{brown2023fluid,brown2020index} by truncating the time horizon and applying results for problems with a finite time horizon.

To the best of our knowledge, the expected average reward criterion has only been considered when the processes are coupled by a single constraint. The focus has been on the multi-armed restless bandits introduced by Whittle, which have two actions and a single constraint that fixes the numbers of processes or arms that must select each action. In the seminal paper \cite{whittle1988restless}, Whittle defined an index policy for multi-armed restless bandits where a technical condition known as indexability holds, and conjectured the asymptotic optimality of this policy. A counterexample for this conjecture was provided in \cite{weber1990index}. However, \cite{weber1990index} also established that the conjecture is true when a dynamical system defined by the index policy has a global attractor; \cite{weber1990index} also assumed the ergodicity of the single-arm problem. Exponentially small bounds for the optimality gap were provided in \cite{gast2023exponential} when the same assumptions hold and the global attractor satisfies a nonsingularity condition. In addition, \cite{hodge2015asymptotic} generalized the asymptotic optimality result to multi-armed restless bandit problems with multiple actions and a single constraint with a specific structure.

The Whittle index policy is contained in the class of LP-priority policies introduced in \cite{verloop2016asymptotically}. The latter paper shows that an LP-priority policy is asymptotically optimal if the problem is unichain and a global attractor assumption holds, while \cite{gast2023linear} proves performance bounds. LP-priority policies can be more generally applied than the Whittle index policy since they do not depend on the indexability condition. However, both policies rely on global attractivity assumptions that must be checked for each problem instance; while some of our results also rely on a global attractor property, we formally prove this property instead of assuming that it holds as in \cite{weber1990index,verloop2016asymptotically,gast2023exponential,gast2023linear}. More recently, \cite{hong2023restless,hong2024unichain} have proposed novel policies for multi-armed restless bandits and have established asymptotic optimality results and performance bounds without using the global attractivity assumption.

As observed earlier, our assumptions are provably weaker than those in all of the above papers with the possible exception of \cite{hong2023restless}. Furthermore, we provide examples where our approach yields asymptotically optimal policies while the above-mentioned policies are not asymptotically optimal, including the one proposed in \cite{hong2023restless}. We also recall that our approach allows for multiple actions and constraints, while multi-armed restless bandits have only two actions and a single constraint with a specific structure.

Some of the above papers on multi-armed restless bandits have obtained bounds for the optimality gap as a function of the number of arms $n$. Moreover, some papers have derived bounds that decrease exponentially fast with $n$ by imposing stronger assumptions. Such bounds were first derived in \cite{gast2023exponential,gast2023linear} for the Whittle index and LP-priority policies by requiring, apart from the conditions mentioned earlier, that the solution of a linear program is nondegenerate in a suitable sense. Recently, \cite{hong2024exponential} has proposed a \emph{two-set policy} with the same performance guarantees under weaker assumptions: the uniform global attractivity property required in \cite{gast2023linear,gast2023exponential} is replaced by a local stability property which is further shown to be necessary for a broad class of problems. The focus of the current paper is not on bounding the optimality gap as a function of the number of processes $n$, but on obtaining simple policies that are asymptotically optimal under very general assumptions.

At approximately the same time as the conclusion of this work, similar results were derived independently in \cite{yan2024optimal} for the special case of multi-armed restless bandits. The latter paper also uses a deterministic control problem to construct policies that are asymptotically optimal, and the \emph{align and steer} policy considered in \cite{yan2024optimal} belongs to the class of policies analyzed in the present paper. However, the proof techniques are significantly different. The approach of \cite{yan2024optimal} is motivated by the certainty equivalence control principle of control theory and the proofs are based on the so-called Stein's method. In contrast, our approach is motivated by mean-field arguments and our proof techniques combine mean-field limits with interchange of limits and global attractivity arguments. Our results are more general and our approach has the advantage of establishing an explicit connection between the weakly coupled processes and the fluid problem, through mean-field limits.


\subsection{Organization of the paper}
\label{sub: organization of the paper}

The rest of the paper is organized as follows. In Section \ref{sec: problem formulation} we define the weakly coupled Markov decision processes formally and we discuss standard properties of optimal policies and a linear program relaxation. In Section \ref{sec: fluid heuristics} we prove the mean-field limit and asymptotic optimality results assuming that a solution of the fluid problem is available. In Section~\ref{sec: fluid controls} we establish sufficient conditions for the existence of solutions and we obtain structural properties that help to construct a solution. In Section \ref{sec: examples} we give concrete examples and evaluate the performance of our policies through numerical experiments. We also compare our policies with policies proposed in the literature on multi-armed restless bandits. In Section \ref{sec: conclusion} we provide some concluding remarks. Some results are proved in Appendix \ref{app: proofs of various results} and some details about the numerical examples are given in Appendix \ref{app: details about the examples}.

\section{Problem formulation}
\label{sec: problem formulation}

We consider the problem of maximizing the expected average reward obtained over an infinite time horizon by $n$ identical discrete-time Markov decision processes. Each process has the same finte action space $A$ and state space $S$; without loss of generality, we assume that in each state the controller has the same set of actions. Given the states and actions for all the processes, the states reached at the next time step are mutually independent, and the probability that a process moves to state $j$ given that it is in state $i$ and action $a$ is selected for it is denoted by $\condr{p}*{j | i, a}$. However, the processes are not independent since there exist linear constraints that couple the selection of actions across the processes.

The action and state of process $m \in \{1, \dots, n\}$ at time $t \in \N$ are the random variables denoted by $\bA_n(t, m)$ and $\bS_n(t, m)$, respectively; throughout the paper we use boldface notation for quantities that depend on time. We further define
\begin{equation}
	\label{eq: state and state-action frequencies}
	\bx_n(t, i) \defeq \frac{1}{n}\sum_{m = 1}^n \ind{\bS_n(t, m) = i} \quad \text{and} \quad \by_n(t, i, a) \defeq \frac{1}{n}\sum_{m = 1}^n \ind{\bS_n(t, m) = i, \bA_n(t, m) = a}
\end{equation}
for all $t \in \N$, $i \in S$ and $a \in A$. For each time $t$, the fraction of processes in state $i$ is given by the former quantity, while the latter quantity represents the fraction of processes that are in state $i$ for which action $a$ is selected. In particular, $\bx_n(t) \in \R^S$ and $\by_n(t) \in \R^{S \times A}$ are vectors describing the state and state-action frequencies at time $t$, respectively.

Consider matrices $\set{C_n(a)}{a \in A}$ and $\set{E_n(a)}{a \in A}$ with rows indexed by $i \in S$, such that the former matrices have $p$ columns and the latter matrices have $q$ columns. Further, let $d_n$ and $f_n$ be row vectors with $p$ and $q$ entries, respectively. The processes are weakly coupled by the following $p$ equality constraints and $q$ inequality constraints:
\begin{subequations}
	\label{eq: constraints}
	\begin{align}
		&\sum_{a \in A} \by_n(t, a) C_n(a) = d_n \quad \text{for all} \quad t \in \N, \label{seq: equality constraints} \\
		&\sum_{a \in A} \by_n(t, a) E_n(a) \preceq f_n \quad \text{for all} \quad t \in \N. \label{seq: inequality constraints}
	\end{align}
\end{subequations}
In the above expressions $\by_n(t, a) \defeq \vct{\by_n(t, i, a)}{i \in S}$ represents a row vector, and the notation $\preceq$ in the second line refers to the componentwise inequality.

At each time, a reward $r(i, a)$ is obtained by each process in state $i$ for which action $a$ is selected. Informally speaking, we must select the actions over time to maximize
\begin{equation}
	\label{eq: expected average reward}
	\lim_{T \to \infty} \frac{1}{T}\sum_{t = 0}^{T - 1} \frac{1}{n}\sum_{m = 1}^n \conde{r\left(\bS_n(t, m), \bA_n(t, m)\right)} = \lim_{T \to \infty} \frac{1}{T}\sum_{t = 0}^{T - 1} \sum_{a \in A} \conde{\by_n(t, a)}r(a),
\end{equation}
while complying with the constraints \eqref{eq: constraints}; the objective will be formally stated in the next section, where the existence of the limits is addressed. The expression on the left is called gain and represents the expected reward per unit of time obtained by one process, averaged across all the processes and over time. On the right, $r(a) \defeq \vct{r(i, a)}{i \in S}$ is a column vector, and the equality follows from the definition of the state-action frequencies.

For example, the processes could represent projects that are carried out in parallel and produce revenues or expenses over time; the latter correspond to negative rewards. The goal is to provide a policy for selecting the actions over time in a way that maximizes the average profit obtained per unit of time in the long-run. If the matrices and vectors in \eqref{eq: constraints} are nonnegative, then the relations in \eqref{eq: constraints} can be interpreted as constraints in the allocation of $p + q$ resources. Namely, performing action $a$ when a project is in state $i$ involves $C_n(i, k, a)$ units of resource $k \in \{1, \dots, p\}$ and $E_n(i, l, a)$ units of resource $p + l \in \{p + 1, \dots, p + q\}$. For instance, suppose that these resources are employees and trucks, respectively. Then there are $n d_n(k)$ employees and $n f_n(l)$ trucks in total. Also, \eqref{seq: equality constraints} says that all the employees must work in some project and \eqref{seq: inequality constraints} says that it is not necessary to use all the trucks.

If $A = \{0, 1\}$ and one of the following conditions holds, then the set of weakly coupled Markov decision processes is called multi-armed restless bandit.
\begin{enumerate}
	\item[(a)] There is a unique equality constraint and there are no inequality constraints. Also, the unique equality constraint satisfies that
	\begin{equation*}
		n d_n \in \{1, \dots, n\} \quad \text{while} \quad C_n(i, 0) = 0 \quad \text{and} \quad C_n(i, 1) = 1 \quad \text{for all} \quad i \in S.
	\end{equation*}
	
	\item[(b)] There is a unique inequality constraint and there are no equality constraints. Further, the unique inequality constraint satisfies that
	\begin{equation*}
		f_n \in [0, 1] \quad \text{while} \quad E_n(i, 0) = 0 \quad \text{and} \quad E_n(i, 1) = 1 \quad \text{for all} \quad i \in S.
	\end{equation*}
\end{enumerate}
In the above expressions $d_n$ and $f_n$ are scalars, and $C_n(a)$ and $E_n(a)$ are column vectors for each $a \in A$. As noted in Section \ref{sec: introduction}, multi-armed restless bandit problems have received considerable attention, under either of the latter conditions. The setup consider in the present paper is significantly more general.

\subsection{Existence of optimal policies}
\label{sub: existence of optimal policies}

The weakly coupled processes can be regarded as a single Markov decision process with finite state and action spaces. Specifically, the state space is $S^n$ and the action space for a given state is the subset of $A^n$ where the coupling constraints \eqref{eq: constraints} hold. More precisely, a state $(i_1, \dots, i_n) \in S^n$ and a tuple $(a_1, \dots, a_n) \in A^n$ uniquely determine a vector $y$ of state-action frequencies as in \eqref{eq: state and state-action frequencies}. The action space at a given state $(i_1, \dots, i_n)$ is the subset of $A^n$ such that the state-action frequency vector $y$ satisfies \eqref{eq: constraints}.

Consider the sets $\Pi_n^{SD} \subset \Pi_n^{SR} \subset \Pi_n^{HR}$ of stationary and deterministic, stationary and randomized and history-dependent and randomized policies for the latter Markov decision process; see \cite[Section 2.1.5]{puterman2014markov} for precise definitions. Loosely speaking, stationary policies select actions using solely the current state, whereas the history-dependent policies select actions using the current state and the history of all past states and actions. Randomized policies can be regarded as mappings that assign to each state, or history of states and actions, a probability distribution defined over the action space of the current state; an action is then sampled from this distribution. The policy is called deterministic if the latter probability distribution is always concentrated at a single action.

It follows from \cite[Proposition 8.1.1]{puterman2014markov} that the gain
\begin{equation*}
g_n^\pi(s) \defeq \lim_{T \to \infty} \frac{1}{T}\sum_{t = 0}^{T - 1} \frac{1}{n}\sum_{m = 1}^n \conds{E_s^\pi}*{r\left(\bS_n(t, m), \bA_n(t, m)\right)} = \lim_{T \to \infty} \frac{1}{T}\sum_{t = 0}^{T - 1} \sum_{a \in A} \conds{E_s^\pi}*{\by_n(t, a)}r(a)
\end{equation*}
exists for all policies $\pi \in \Pi_n^{SR}$ and initial states $s \in S^n$, i.e., the limits in \eqref{eq: expected average reward} exist. Here the expectation is taken with respect to the Markov chain defined by the initial state and the stationary and randomized policy. Also, \cite[Theorems 9.1.7 and 9.1.8]{puterman2014markov} imply that
\begin{equation*}
g_n^*(s) \defeq \sup_{\pi \in \Pi_n^{SD}} g_n^\pi(s)
\end{equation*}
is attained simultaneously for all $s \in S^n$ by some $\pi^* \in \Pi_n^{SD}$, and \cite[Theorem 9.1.6]{puterman2014markov} yields
\begin{equation*}
g_n^*(s) = \sup_{\pi \in \Pi_n^{HR}} \limsup_{T \to \infty} \frac{1}{T}\sum_{t = 0}^{T - 1} \frac{1}{n}\sum_{m = 1}^n \conds{E_s^\pi}*{r\left(\bS_n(t, m), \bA_n(t, m)\right)}.
\end{equation*}

In other words, there exists a stationary and deterministic policy $\pi^*$ that has maximal gain across all the initial states. This policy maximizes the expressions in \eqref{eq: expected average reward} over the set of policies for which the limits exist and provides an expected average reward not smaller than any history-dependent and randomized policy, as stated in the above equation.

An optimal policy $\pi^*$ does not typically admit a closed form expression. While $\pi^*$ can be computed numerically using dynamic programming, the computation time is exponential in the number of processes $n$ and thus exceedingly large in many applications. This creates an incentive for simple policies that perform well when $n$ is large.

\subsection{Fluid relaxation}
\label{sub: fluid relaxation}

In this paper we analyze a class of policies that are easy to compute and asymptotically optimal, i.e., they achieve the maximum gain as $n \to \infty$. For this purpose we adopt the following assumption throughout the paper. It says that \eqref{eq: constraints} is consistent in the limit.

\begin{assumption}
	\label{ass: limiting constraints}
	For each $a \in A$, the following limits exist and are finite:
	\begin{equation*}
		C(a) \defeq \lim_{n \to \infty} C_n(a), \quad d \defeq \lim_{n \to \infty} d_n, \quad E(a) \defeq \lim_{n \to \infty} E_n(a) \quad \text{and} \quad f \defeq \lim_{n \to \infty} f_n.
	\end{equation*}
\end{assumption}

Let $e \in \R^S$ denote the column vector with each of its entries equal to one. The processes of state and state-action frequencies $\bx_n$ and $\by_n$ take values in
\begin{equation*}
	X \defeq \set{x \in [0, 1]^{S}}{x e = 1} \quad \text{and} \quad Y \defeq \set{y \in [0, 1]^{S \times A}}{\sum_{a \in A} y(a)e = 1},
\end{equation*}
respectively; $x$ and $y(a) = \vct{y(i, a)}{i \in S}$ are regarded as row vectors. For each $a \in A$, let $P(a)$ denote the matrix such that the entry $(i, j)$ is $\condr{p}*{j | i, a}$. The following linear program will play an important role throughout the paper.

\begin{definition}
	\label{def: fluid relaxation}
	We call \emph{fluid relaxation} the following optimization problem
	\begin{equation}
		\label{pr: fluid relaxation}
		\begin{split}
			\underset{y \in Y}{\text{maximize}} &\quad \sum_{a \in A} y(a)r(a) \\
			\text{subject to} &\quad \sum_{a \in A} y(a)P(a) = \sum_{a \in A} y(a), \\
			&\quad \sum_{a \in A} y(a)C(a) = d, \\
			&\quad \sum_{a \in A} y(a)E(a) \preceq f.
		\end{split}
	\end{equation}
	Its optimal value is denoted by $g_r$.
\end{definition}

The optimization variables $y(i, a)$ can be interpreted as
\begin{equation*}
	\lim_{n \to \infty} \lim_{T \to \infty} \frac{1}{T}\sum_{t = 0}^{T - 1} \conde*{\by_n(t, i, a)}
\end{equation*}
when the limits exist. The first constraint in \eqref{pr: fluid relaxation} is a balance equation that is satisfied by long-run averages as defined by the inner limit; see the proof of the next proposition. The other constraints say that these long-run averages must satisfy \eqref{eq: constraints} as $n \to \infty$. Moreover, the objective of the problem can be interpreted as the limiting gain.

The following proposition says that $g_r$ is an upper bound for the gain of the weakly coupled Markov decision processes as $n \to \infty$. An intuitive explanation of this result, based on the above interpretation of \eqref{pr: fluid relaxation}, is that the constraints \eqref{eq: constraints} must hold at each time and for each sample path, whereas the constraints of \eqref{pr: fluid relaxation} are only imposed to long-run averages, thereby relaxing the problem. The proof of the proposition is rather standard but we provide it in Appendix \ref{app: proofs of various results} for completeness.

\begin{proposition}
	\label{prop: asymptotic upper bound for the gain}
	For each $a \in A$ and some fixed $n$, suppose that
	\begin{equation}
		\label{eq: constant constraints}
		C_n(a) = C(a), \quad d_n = d, \quad E_n(a) = E(a) \quad \text{and} \quad f_n = f.
	\end{equation}
	Then $g_n^*(s) \leq g_r$ for all $s \in S^n$. In the general case where \eqref{eq: constant constraints} may not hold, we have
	\begin{equation}
		\label{eq: asymptotic upper bound}
		\limsup_{n \to \infty} \sup_{s \in S^n} g_n^*(s) \leq g_r.
	\end{equation}
\end{proposition}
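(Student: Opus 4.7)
The plan is to reduce to a stationary randomized policy, extract a long-run average state-action frequency vector, show that this vector is feasible for the fluid relaxation, and then take limits in $n$.

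First I would fix $n$ and a stationary randomized policy $\pi \in \Pi_n^{SR}$, so that the joint process $(\bS_n(t,\scdot), \bA_n(t,\scdot))$ is a finite-state Markov chain and the Cesàro averages
\begin{equation*}
	\bar{\by}_n^\pi(s, i, a) \defeq \lim_{T \to \infty} \frac{1}{T}\sum_{t = 0}^{T - 1} \conds{E_s^\pi}*{\by_n(t, i, a)}
\end{equation*}
exist for every initial state $s \in S^n$. Summing over actions gives $\bar{\bx}_n^\pi(s, i) = \sum_{a} \bar{\by}_n^\pi(s, i, a)$, and the gain associated with $\pi$ and $s$ equals $\sum_{a \in A} \bar{\by}_n^\pi(s, a) r(a)$. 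Since the construction above applies to the optimal $\pi^* \in \Pi_n^{SD} \subset \Pi_n^{SR}$ provided by \cite[Theorem 9.1.7]{puterman2014markov}, it suffices to show that $\bar{\by}_n^\pi(s, \scdot)$ is feasible for \eqref{pr: fluid relaxation} under the constraints determined by $C_n$, $d_n$, $E_n$, $f_n$.

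Next I would verify feasibility. The coupling relations \eqref{eq: constraints} hold at every time $t$ almost surely, hence they are preserved under the Cesàro average of expectations, giving $\sum_a \bar{\by}_n^\pi(s, a) C_n(a) = d_n$ and $\sum_a \bar{\by}_n^\pi(s, a) E_n(a) \preceq f_n$. The balance equation is obtained by conditioning on the pair of random vectors $(\bS_n(t, \scdot), \bA_n(t, \scdot))$: since transitions are independent across processes with kernel $P(a)$ depending on the selected action,
\begin{equation*}
	\conds{E_s^\pi}*{\bx_n(t + 1)\innermid \bS_n(t, \scdot), \bA_n(t, \scdot)} = \sum_{a \in A} \by_n(t, a) P(a).
\end{equation*}
Taking expectations, averaging over $t \in \{0, \dots, T - 1\}$, and passing to $T \to \infty$ makes both $\frac{1}{T}\sum_t \conds{E_s^\pi}*{\bx_n(t)}$ and $\frac{1}{T}\sum_t \conds{E_s^\pi}*{\bx_n(t + 1)}$ converge to $\bar{\bx}_n^\pi(s)$, so $\sum_a \bar{\by}_n^\pi(s, a) P(a) = \sum_a \bar{\by}_n^\pi(s, a)$. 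This proves $g_n^\pi(s) \leq g_r$ whenever the constraint data matches the limiting one, which gives the first claim after taking supremum over $\pi$ and $s$.

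For the general case, I would pass to the limit in $n$ using Assumption \ref{ass: limiting constraints} and compactness of $Y$. Choose a sequence $n_k \to \infty$ and $s_{n_k} \in S^{n_k}$ along which $g_{n_k}^*(s_{n_k})$ approaches the $\limsup$ in \eqref{eq: asymptotic upper bound}, and let $\pi_{n_k}^* \in \Pi_{n_k}^{SD}$ be optimal at $s_{n_k}$. The associated averages $\bar{\by}_{n_k} \defeq \bar{\by}_{n_k}^{\pi_{n_k}^*}(s_{n_k}, \scdot)$ lie in the compact set $Y$, so along a subsequence they converge to some $y^\infty \in Y$. The balance equation $\sum_a \bar{\by}_{n_k}(a) P(a) = \sum_a \bar{\by}_{n_k}(a)$ passes to the limit trivially, and the coupling constraints pass to the limit by Assumption \ref{ass: limiting constraints}, so $y^\infty$ is feasible for \eqref{pr: fluid relaxation}. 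Continuity of the objective then yields
\begin{equation*}
	\limsup_{n \to \infty} \sup_{s \in S^n} g_n^*(s) = \lim_k \sum_{a \in A} \bar{\by}_{n_k}(a) r(a) = \sum_{a \in A} y^\infty(a) r(a) \leq g_r.
\end{equation*}

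The main obstacle is essentially bookkeeping rather than a deep difficulty: one must carefully justify interchanging the Cesàro limit with expectation when deriving the balance equation (which follows from boundedness of $\by_n$), and one must confirm that the coupling constraints survive the Cesàro averaging before the compactness argument in $n$ can be invoked.
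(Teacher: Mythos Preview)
Your proposal is correct and follows essentially the same route as the paper: form the Ces\`aro-averaged state-action frequencies for a stationary policy, verify the balance equation and that the coupling constraints survive the averaging, and then for the general case extract a convergent subsequence in the compact set $Y$ and pass the constraints to the limit via Assumption~\ref{ass: limiting constraints}. The only cosmetic difference is that the paper phrases the second part as a contradiction argument whereas you argue directly along a subsequence realizing the $\limsup$; the underlying mechanism is identical.
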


In the following two sections we will establish that the inequality in \eqref{eq: asymptotic upper bound} is in fact an equality under mild assumptions. Moreover, we will provide stationary policies for the $n$ weakly coupled processes with gains that approach $g_r$ as $n \to \infty$.

\section{Asymptotic optimality}
\label{sec: fluid heuristics}

An important characteristic of the weakly coupled processes is that two processes in the same state will react to the same action in an identically distributed way. Therefore, it makes sense to focus the search for asymptotically optimal policies on a class of policies that do not distinguish between two processes that are in the same state. More precisely, we may restrict our attention to stationary policies which can be defined by mappings that assign to each state frequency vector a state-action frequency vector. Mean-field theory suggests that then the state and state-action frequency processes $\bx_n$ and $\by_n$ behave deterministically in the limit as $n \to \infty$, which motivates the next definition.

\begin{definition}
	\label{def: fluid control}
	Consider a function $\map{\phi}{X}{Y}$ and let us use the short-hand notation $\phi(x, a)$ for $y(a)$ when $y = \phi(x)$. The latter function is called a fluid control if
	\begin{equation*}
		\sum_{a \in A} \phi(x, a) = x, \quad \sum_{a \in A} \phi(x, a)C(a) = d \quad \text{and} \quad \sum_{a \in A} \phi(x, a)E(a) \preceq f \quad \text{for all} \quad x \in X.
	\end{equation*}
	The fluid trajectory induced by a fluid control $\phi$ and an initial condition $x^0 \in X$ is the sequence $\set{\bx(t), \by(t)}{t \in \N}$ defined recursively by:
	\begin{equation*}
		\bx(0) = x^0, \quad \by(t) = \phi\left(\bx(t)\right) \quad \text{and} \quad \bx(t + 1) = \sum_{a \in A} \by(t, a)P(a) \quad \text{for all} \quad t \geq 0.
	\end{equation*}
\end{definition}

The deterministic vectors $\bx(t)$ and $\by(t)$ can be regarded as the fluid counterparts of $\bx_n(t)$ and $\by_n(t)$, respectively, while $\phi$ is the counterpart of a policy that assigns to each state frequency vector $x$ a state-action frequency vector $\phi(x)$. The first condition imposed on $\phi$ is that the state-action frequencies must be consistent with the state frequencies, whereas the other two conditions enforce the constraints \eqref{eq: constraints} with the matrices and vectors therein replaced by the limits in Assumption \ref{ass: limiting constraints}. The dynamics defining a fluid trajectory state that the fluid control $\phi$ determines the state-action frequencies $\by(t)$ as a function of the state frequencies $\bx(t)$, and then $\bx(t + 1)$ is obtained analogously to
\begin{equation*}
	\conde*{\bx_n(t + 1)} = \sum_{a \in A} \conde*{\by_n(t, a)}P(a).
\end{equation*}
This equation holds for the $n$ processes and any given policy, provided that the expectations are conditional on the policy and the initial states of all the processes.

Note that $\bx_n$ and $\by_n$ take values in the discrete spaces
\begin{equation*}
	X_n \defeq \set{x \in X}{nx \in \Z^S} \quad \text{and} \quad Y_n \defeq \set{y \in Y}{ny \in \Z^{S \times A}},
\end{equation*}
respectively. In general, a fluid control may not define a policy for the $n$ weakly coupled processes because its values may be outside of $Y_n$ or not satisfy \eqref{eq: constraints}. However, a principled way of obtaining policies that do not distinguish between processes in the same state is to first search for a fluid control $\map{\phi}{X}{Y}$ and then construct functions $\map{\phi_n}{X_n}{Y_n}$ that resemble $\phi$ and define admissible policies for the $n$ processes.

\begin{definition}
	\label{def: fluid heuristic}
	Consider functions $\map{\phi_n}{X_n}{Y_n}$ and let us use the short-hand notation $\phi_n(x, a)$ for $y(a)$ when $\phi_n(x) = y$, as for fluid controls. Also, assume that
	\begin{equation*}
		\sum_{a \in A} \phi_n(x, a) = x, \quad \sum_{a \in A} \phi_n(x, a)C_n(a) = d_n \quad \text{and} \quad \sum_{a \in A} \phi_n(x, a)E_n(a) \preceq f_n \quad \text{if} \quad x \in X_n.
	\end{equation*}
	The functions $\set{\phi_n}{n \geq 1}$ are discrete controls associated with a fluid control $\phi$ if
	\begin{equation*}
		\lim_{n \to \infty} \max_{x \in X_n} \norm{\phi(x) - \phi_n(x)} = 0;
	\end{equation*}
	the norm $\norm{\scdot}$ can be arbitrary.
\end{definition}

The mapping $\phi_n$ assigns to each $x \in X_n$ a state-action frequency vector $\phi_n(x) \in Y_n$ that satisfies the constraints \eqref{eq: constraints}. We say that a policy $\pi \in \Pi_n^{SR}$ realizes $\phi_n$ if
\begin{equation*}
	\by_n(t) = \phi_n\left(\bx_n(t)\right) \quad \text{for all} \quad t \geq 0
\end{equation*}
when $\pi$ is used. A policy that realizes $\phi_n$ can be obtained by computing $\by_n(t)$ as above and then selecting the $n \by_n(t, i, a)$ processes that use action $a$ in state $i$ uniformly at random among all the processes in state $i$. While a policy that realizes $\phi_n$ is generally not unique, the gain and the joint law of the processes $\bx_n$ and $\by_n$ are the same for any such policy; they are determined by $\phi_n$ and the initial state frequencies. Hence, we will focus on the mappings $\phi_n$ without specifying the policies that realize these mappings explicitly.

Later on in this section, we will assume that a fluid control $\phi$ is available, and that $\phi$ is a continuous function such that every fluid trajectory satisfies
\begin{equation}
	\label{eq: sufficient asymptotic optimality condition for fluid control}
	\lim_{t \to \infty} \sum_{a \in A} \by(t, a) r(a) = g_r,
\end{equation}
regardless of the initial condition. Using mean-field arguments, we will prove that the gain of any discrete control $\phi_n$ associated with $\phi$ approaches $g_r$ as $n \to \infty$; we will not require any conditions on the discrete controls besides those already stated in Definition \ref{def: fluid heuristic}.

In view of the latter result, the problem of finding asymptotically optimal policies can be decomposed in the following two subproblems.
\begin{enumerate}
	\item[(a)] \emph{Fluid problem:} construct a continuous fluid control $\phi$ that satisfies \eqref{eq: sufficient asymptotic optimality condition for fluid control}.
	
	\item[(b)] \emph{Approximation problem:} construct discrete controls $\phi_n$ as in Definition \ref{def: fluid heuristic}.
\end{enumerate}
Section \ref{sec: fluid controls} will focus on (a), which is the hardest. Then the discrete controls in (b) can be defined by letting $\phi_n(x)$ be the solution of the following optimization problem:
\begin{equation*}
	\begin{split}
		\underset{y \in Y_n}{\text{minimize}} &\quad \norm{y - \phi(x)} \\
		\text{subject to} &\quad \sum_{a \in A} y(a) = x, \\
		&\quad \sum_{a \in A} y(a)C_n(a) = d_n, \\
		&\quad \sum_{a \in A} y(a)E_n(a) \preceq f_n.
	\end{split}
\end{equation*}
However, simpler rounding procedures are possible under certain conditions; two such procedures and the corresponding conditions are discussed in Section \ref{sec: fluid controls}. A final step would be to define policies $\pi_n$ that realize the discrete controls $\phi_n$, but this is straightforward; e.g., it can be done through the random sampling procedure mentioned earlier.

\subsection{Mean-field limit}
\label{sub: mean-field limit}

Consider discrete controls $\phi_n$ associated with a fluid control $\phi$. As observed earlier, the joint distribution of the state and state-action frequency processes associated with any policy that realizes $\phi_n$ is completely determined by $\phi_n$ and the initial state frequencies. Further, $\bx_n$ is a discrete-time Markov chain and $\by_n(t) = \phi_n\left(\bx_n(t)\right)$ for all $t \geq 0$. Below we provide a convenient construction of these two processes.

For this purpose we consider a collection $\set{\condr{B_t^l}*{j | i, a}}{l \geq 1, t \geq 1,  i, j \in S, a \in A}$ of Bernoulli random variables with the following two properties. First,
\begin{equation*}
	\condr{B_t^l}*{j | i, a} = 1 \quad \text{and} \quad \condr{B_t^l}*{k | i, a} = 0 \quad \text{for all} \quad k \neq j \quad \text{with probability} \quad \condr{p}*{j | i, a}
\end{equation*}
for all $l \geq 1$, $t \geq 1$, $i \in S$ and $a \in A$. Second, the random vectors $\vct{\condr{B_t^l}*{j | i, a}}{j \in S}$ are independent across $l \geq 1$, $t \geq 1$, $i \in S$ and $a \in A$. The same collection of random variables can be used to construct the state and state-action frequency processes for any given $n$, and thus the notation does not account for the number of processes $n$.

At each time $t$, there are $n\by_n(t, i, a)$ processes in state $i$ with action $a$ selected, which we arrange according to their indexes. We postulate that the process with the $l$th largest index is in state $j$ at $t + 1$ if $\condr{B_{t + 1}^l}*{j | i, a} = 1$, which endows $\bx_n(t + 1)$ with the desired distribution. More specifically, we construct $\bx_n$ and $\by_n$ recursively as functions of $\bx_n(0)$ and the above Bernoulli random variables, such that for each $t \geq 0$ we have:
\begin{equation*}
	\by_n(t) = \phi_n\left(\bx_n(t)\right) \quad \text{and} \quad \bx_n(t + 1, j) = \frac{1}{n}\sum_{i \in S} \sum_{a \in A} \sum_{l = 1}^{n\by_n(t, i, a)} \condr{B_{t + 1}^l}*{j | i, a} \quad \text{for all} \quad j \in S.
\end{equation*}

We further define
\begin{equation*}
	\bz_n(t + 1, j) \defeq \frac{1}{n}\sum_{i \in S} \sum_{a \in A} \sum_{l = 1}^{n\by_n(t, i, a)} \left[\condr{B_{t + 1}^l}*{j | i, a} - \condr{p}*{j | i, a}\right]
\end{equation*}
for all $t \geq 0$ and $j \in S$. Then we can write
\begin{equation}
	\label{eq: equation for x_n, y_n and z_n}
	\bx_n(t + 1) = \bz_n(t + 1) + \sum_{a \in A} \by_n(t, a)P(a) \quad \text{for all} \quad t \geq 0.
\end{equation}

Clearly, $\bz_n(t)$ has mean zero for all $t \geq 1$; it is possible to check that $\bz_n$ is in fact a martingale difference with respect to the natural filtration, but this will not be used explicitly. The next lemma bounds the variance of $\bz_n(t)$ and is proved in Appendix \ref{app: proofs of various results}.

\begin{lemma}
	\label{lem: variance of z_n}
	Consider the constants
	\begin{equation*}
		q_{\min}(j) \defeq \min_{i \in S, a \in A} \left[1 - \condr{p}*{j | i, a}\right] \leq \max_{i \in S, a \in A} \left[1 - \condr{p}*{j | i, a}\right] \eqdef q_{\max}(j) \quad \text{for all} \quad j \in S.
	\end{equation*}
	For each $t \geq 1$ and $j \in S$, we have
	\begin{equation*}
		\frac{q_{\min}(j)}{n}\conde*{\bx_n(t, j)} \leq \conde*{\bz_n^2(t, j)} \leq \frac{q_{\max}(j)}{n}\conde*{\bx_n(t, j)}.
	\end{equation*}
\end{lemma}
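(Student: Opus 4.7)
The plan is to condition on $\by_n(t-1)$ and exploit the conditional independence of the Bernoulli random variables used in the construction of $\bz_n(t)$. Since $\by_n(t-1) = \phi_n(\bx_n(t-1))$ is a deterministic function of $\bx_n(t-1)$, conditioning on either gives the same information.

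The first step is to observe that, conditional on $\by_n(t-1)$, the collection of zero-mean random variables $\{\condr{B_t^l}*{j | i, a} - \condr{p}*{j | i, a} : i \in S,\ a \in A,\ 1 \leq l \leq n\by_n(t-1, i, a)\}$ is mutually independent, thanks to the independence assumption imposed just before the lemma, and each term has variance $\condr{p}*{j | i, a}(1 - \condr{p}*{j | i, a})$. Summing the variances yields
\[\E[\bz_n^2(t, j) \mid \by_n(t-1)] \;=\; \frac{1}{n} \sum_{i \in S,\, a \in A} \by_n(t-1, i, a) \, \condr{p}*{j | i, a}\bigl(1 - \condr{p}*{j | i, a}\bigr).\]
The second step is to use the definitions of $q_{\min}(j)$ and $q_{\max}(j)$ to bound the factor $1 - \condr{p}*{j | i, a}$ inside the sum, which sandwiches the conditional second moment between $q_{\min}(j)/n$ and $q_{\max}(j)/n$ times $\sum_{i, a} \by_n(t-1, i, a) \condr{p}*{j | i, a}$. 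By equation \eqref{eq: equation for x_n, y_n and z_n} combined with the fact that $\bz_n(t)$ has zero conditional mean given $\by_n(t-1)$, this latter sum equals $\E[\bx_n(t, j) \mid \by_n(t-1)]$. Taking unconditional expectations via the tower property then gives the claimed bounds.

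The argument is essentially routine bookkeeping; the only conceptual ingredient is the mutual conditional independence of the Bernoulli variables indexed by different triples $(i, a, l)$, which follows directly from the construction of the probability space described above the lemma. I therefore do not anticipate any substantive obstacle.
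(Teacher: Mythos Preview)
Your proposal is correct and follows essentially the same approach as the paper's proof. The only cosmetic difference is that the paper rewrites the random-length sum as a fixed-length sum using the indicators $V_t^l(i,a)=\ind{n\by_n(t-1,i,a)\geq l}$ and then computes unconditional second moments directly, whereas you condition on $\by_n(t-1)$ and invoke the tower property; both routes exploit the same independence between the time-$t$ Bernoulli variables and $\by_n(t-1)$, and both land on the identity $\sum_{i,a}\by_n(t-1,i,a)\,p(j\mid i,a)=\E[\bx_n(t,j)\mid\by_n(t-1)]$ before taking expectations.
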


The lemma will be used in the proof of the following proposition, which establishes that the fluid trajectories defined by the fluid control $\phi$ are the mean-field limit of the state and state-action frequency processes $\bx_n$ and $\by_n$ determined by $\phi_n$. More precisely, the latter processes converge weakly to fluid trajectories provided that the initial state frequency vectors $\bx_n(0)$ have a limit in distribution as $n \to \infty$.

\begin{proposition}
	\label{prop: mean-field limit}
	Assume that $\phi$ is continuous and there exists a random variable $x^0$ with values in $X$ such that $\bx_n(0) \Rightarrow x^0$ as $n \to \infty$. Furthermore, suppose that $x^0$ is defined on some set $\Omega$ and for each $\omega \in \Omega$ let $\set{\bx(\omega, t), \by(\omega, t)}{t \geq 0}$ be the fluid trajectory with $\bx(\omega, 0) = x^0(\omega)$, which defines two stochastic process. Then we have
	\begin{equation*}
		\bx_n(t) \Rightarrow \bx(t) \quad \text{and} \quad \by_n(t) \Rightarrow \by(t) \quad \text{as} \quad n \to \infty \quad \text{for all} \quad t \geq 0.
	\end{equation*}
\end{proposition}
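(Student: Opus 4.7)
The plan is to proceed by induction on $t$, combining the recursion \eqref{eq: equation for x_n, y_n and z_n} with two key ingredients: the uniform approximation $\phi_n \to \phi$ from Definition \ref{def: fluid heuristic}, and the variance bound of Lemma \ref{lem: variance of z_n}. The base case $t = 0$ is the hypothesis on $\bx_n(0)$. In the inductive step, assuming $\bx_n(t) \Rightarrow \bx(t)$, I will first derive $\by_n(t) \Rightarrow \by(t)$ and then $\bx_n(t+1) \Rightarrow \bx(t+1)$.

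For the $\by$-step I write
$$\by_n(t) = \phi_n(\bx_n(t)) = \phi(\bx_n(t)) + \bigl[\phi_n(\bx_n(t)) - \phi(\bx_n(t))\bigr].$$
The bracketed remainder is deterministically dominated by $\max_{x \in X_n}\norm{\phi_n(x) - \phi(x)}$, which tends to $0$ by Definition \ref{def: fluid heuristic}, so it converges to $0$ in probability. Since $\phi$ is continuous on the compact set $X$, the continuous mapping theorem applied to the inductive hypothesis gives $\phi(\bx_n(t)) \Rightarrow \phi(\bx(t)) = \by(t)$, and Slutsky's theorem then yields $\by_n(t) \Rightarrow \by(t)$.

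For the $\bx$-step, \eqref{eq: equation for x_n, y_n and z_n} reads $\bx_n(t+1) = \sum_{a \in A} \by_n(t, a) P(a) + \bz_n(t+1)$. The drift is a linear function of $\by_n(t)$, so continuous mapping together with the previous step yields $\sum_a \by_n(t, a) P(a) \Rightarrow \sum_a \by(t, a) P(a) = \bx(t+1)$. For the noise, Lemma \ref{lem: variance of z_n} gives $\conde*{\bz_n^2(t+1, j)} \leq q_{\max}(j)/n \to 0$ for each $j \in S$, hence $\bz_n(t+1) \inprob 0$; a final application of Slutsky's theorem closes the induction.

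The only genuine subtlety is that $\phi_n$ is defined solely on the grid $X_n$, whereas the limit $\bx(t)$ typically takes values in $X \setminus X_n$; the decomposition above is tailored precisely to this asymmetry, isolating the uniform approximation bound of Definition \ref{def: fluid heuristic} (evaluated only at the grid-valued random variable $\bx_n(t) \in X_n$) from the continuous mapping argument (which uses the continuous extension of $\phi$ to all of $X$). Beyond this point and the routine combination of weak convergence with in-probability convergence via Slutsky at each step, the argument is standard.
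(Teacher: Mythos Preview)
Your proof is correct and follows essentially the same inductive scheme as the paper: decompose $\by_n(t)=\phi(\bx_n(t))+[\phi_n(\bx_n(t))-\phi(\bx_n(t))]$, use the uniform bound from Definition~\ref{def: fluid heuristic} for the remainder, then handle $\bx_n(t+1)$ via \eqref{eq: equation for x_n, y_n and z_n} and the variance bound of Lemma~\ref{lem: variance of z_n}. The only cosmetic difference is that the paper packages the ``weak limit plus in-probability limit'' steps via joint convergence (Billingsley's Theorem~3.1) and the continuous mapping theorem, whereas you invoke Slutsky's theorem directly; for the additive maps involved these are equivalent.
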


\begin{proof}
	We proceed by induction, assuming that $\bx_n(t) \Rightarrow \bx(t)$ as $n \to \infty$ and then showing that $\by_n(t) \Rightarrow \by(t)$ and $\bx_n(t + 1) \Rightarrow \bx(t + 1)$. Clearly, the assumption holds for $t = 0$.
	
	Assume that $\bx_n(t) \Rightarrow \bx(t)$ as $n \to \infty$ and let $\map{\zeta}{\R^{S \times A} \times X}{\R^{S \times A}}$ be defined by $\zeta(w, x) \defeq w + \phi(x)$. If we let $\bw_n(t) \defeq \phi_n\left(\bx_n(t)\right) - \phi\left(\bx_n(t)\right)$, then $\by_n(t) = \zeta\left(\bw_n(t), \bx_n(t)\right)$. Moreover, $\bw_n(t) \Rightarrow 0$ as $n \to \infty$ by Definition \ref{def: fluid heuristic}, because
	\begin{equation*}
		\norm{\bw_n(t)} \leq \max_{x \in X_n} \norm{\phi_n(x) - \phi(x)}.
	\end{equation*}
	By \cite[Theorem 3.1]{billingsley1999convergence}, we have $\left(\bw_n(t), \bx_n(t)\right) \Rightarrow \left(0, \bx(t)\right)$ as $n \to \infty$; see \cite[Lemma 9]{goldsztajn2023sparse} for further details. The continuous mapping theorem yields $\by_n(t) \Rightarrow \zeta\left(0, \bx(t)\right) = \by(t)$.
	
	Lemma \ref{lem: variance of z_n} implies that $\bz_n(t + 1) \Rightarrow 0$ as $n \to \infty$ since
	\begin{equation*}
		\condp*{\norm{\bz_n(t + 1)}_2^2 \geq \varepsilon} \leq \frac{\conde*{\norm{\bz_n(t + 1)}_2^2}}{\varepsilon} \leq \frac{\conde*{\norm{\bx_n(t)}_1}}{n \varepsilon} \leq \frac{1}{n \varepsilon} \quad \text{for all} \quad \varepsilon > 0.
	\end{equation*}
	Therefore, $\left(\by_n(t), \bz_n(t + 1)\right) \Rightarrow \left(\by(t), 0\right)$ as $n \to \infty$ by \cite[Theorem 3.1]{billingsley1999convergence}. Consider now the continuous function $\map{\eta}{\R^{S \times A} \times \R^S}{\R^S}$ such that
	\begin{equation*}
		\eta(y, z) \defeq z + \sum_{a \in A} y(a)P(a).
	\end{equation*}
	Then $\bx_n(t + 1) = \eta\left(\by_n(t), \bz_n(t + 1)\right)$ by \eqref{eq: equation for x_n, y_n and z_n}, and it follows from the continuous mapping theorem that $\bx_n(t + 1) \Rightarrow \eta\left(\by(t), 0\right) = \bx(t + 1)$ as $n \to \infty$.
\end{proof}

\subsection{Asymptotic optimality result}
\label{sub: asymptotic optimality}

The next lemma assumes that the fluid control $\phi$ is continuous and satisfies \eqref{eq: sufficient asymptotic optimality condition for fluid control}. Under these assumptions, the lemma establishes that the gain achieved by the discrete control $\phi_n$ approaches the optimal value of the fluid relaxation $g_r$ as $n \to \infty$ whenever $\bx_n(0)$ has a stationary distribution for each $n$; at least one stationary distribution for the Markov chain $\bx_n$ always exists since its state space is finite, but this distribution may not be unique. The proof of the lemma relies on the mean-field limit obtained in Proposition \ref{prop: mean-field limit}.

\begin{lemma}
	\label{lem: stationary rewards}
	Assume that $\phi$ is continuous and that every fluid trajectory satisfies
	\begin{equation*}
		\lim_{t \to \infty} \sum_{a \in A} \by(t, a) r(a) = g_r,
	\end{equation*}
	regardless of the initial condition. Also, assume that $\bx_n(0)$ is distributed as a stationary distribution of $\bx_n$, and thus $\bx_n$ is stationary. Then we have
	\begin{equation}
		\label{eq: limit of stationary reward}
		\lim_{n \to \infty} \sum_{a \in A} \conde*{\by_n(t, a)}r(a) = g_r \quad \text{for all} \quad t \geq 0.
	\end{equation}
\end{lemma}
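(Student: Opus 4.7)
The plan is to reduce the claim to showing that every weakly convergent subsequence of $\bx_n(0)$ produces the right limit, and then use Proposition \ref{prop: mean-field limit} together with the assumed global attractivity of fluid trajectories.

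First, I would exploit stationarity to make the time variable $t$ irrelevant. Since $\by_n(t) = \phi_n\left(\bx_n(t)\right)$ is a deterministic function of $\bx_n(t)$, stationarity of $\bx_n$ implies stationarity of $\by_n$, so the expectation $\sum_{a \in A} \conde*{\by_n(t, a)} r(a)$ does not depend on $t$. Hence it suffices to prove \eqref{eq: limit of stationary reward} for $t = 0$, i.e., $\lim_{n \to \infty} \sum_{a \in A} \conde*{\by_n(0, a)} r(a) = g_r$.

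Next, since each $\bx_n(0)$ takes values in the compact simplex $X$, the family $\set{\bx_n(0)}{n \geq 1}$ is tight. Therefore, by Prokhorov's theorem, every subsequence admits a further subsequence $\{n_k\}$ along which $\bx_{n_k}(0) \Rightarrow x^0$ for some $X$-valued random variable $x^0$. By a standard subsequence argument, it is enough to show that along any such subsequence,
\begin{equation*}
	\lim_{k \to \infty} \sum_{a \in A} \conds{E}*{\by_{n_k}(0, a)} r(a) = g_r.
\end{equation*}
Fix such a subsequence. By Proposition \ref{prop: mean-field limit} applied along $\{n_k\}$, we have $\by_{n_k}(t) \Rightarrow \by(t)$ as $k \to \infty$ for every fixed $t \geq 0$, where $\by(t)$ is the state-action component at time $t$ of the fluid trajectory starting from the random initial condition $x^0$. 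Since the $\by_{n_k}(t)$ take values in $[0,1]^{S \times A}$ and $r$ is fixed, bounded convergence yields
\begin{equation*}
	\lim_{k \to \infty} \sum_{a \in A} \conds{E}*{\by_{n_k}(t, a)} r(a) = \sum_{a \in A} \conde*{\by(t, a)} r(a) \quad \text{for every} \quad t \geq 0.
\end{equation*}
Combining this with the fact that the left-hand side is independent of $t$ (by stationarity of $\by_{n_k}$), the right-hand side must also be independent of $t$: there is a constant $c$ such that $\sum_{a \in A} \conde*{\by(t, a)} r(a) = c$ for all $t \geq 0$.

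Finally, I would let $t \to \infty$. By hypothesis, for $\prob$-almost every realization $\omega$ the fluid trajectory with initial condition $x^0(\omega)$ satisfies $\lim_{t \to \infty} \sum_{a \in A} \by(\omega, t, a) r(a) = g_r$. Applying bounded convergence once more gives
\begin{equation*}
	c = \lim_{t \to \infty} \sum_{a \in A} \conde*{\by(t, a)} r(a) = g_r,
\end{equation*}
so $\lim_k \sum_{a \in A} \conds{E}*{\by_{n_k}(0, a)} r(a) = g_r$, completing the proof. The only genuinely delicate point is the interchange of the limits $n \to \infty$ and $t \to \infty$; stationarity of $\bx_n$ is what allows us to decouple them, because it turns the $n \to \infty$ limit of a single-time expectation into an object that is constant in $t$ and can then be evaluated by letting $t \to \infty$ inside the limit.
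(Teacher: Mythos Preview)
Your proposal is correct and follows essentially the same route as the paper: tightness of $\bx_n(0)$ in the compact simplex, a subsequence argument, Proposition~\ref{prop: mean-field limit} to pass to the fluid limit, stationarity to make the expected reward constant in $t$, and finally the global attractivity hypothesis plus bounded convergence to identify that constant as $g_r$. The only difference is cosmetic: the paper first establishes weak convergence of the reward random variable to the constant $g_r$ and then converts to expectations via uniform integrability, whereas you work directly with expectations throughout, which is slightly more economical and entirely sufficient for the statement.
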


\begin{proof}
	Consider the following limits in distribution:
	\begin{equation}
		\label{aux: weak limit of stationary gain}
		\sum_{a \in A} \by_n(t, a)r(a) \Rightarrow g_r \quad \text{as} \quad n \to \infty \quad \text{for all} \quad t \geq 0.
	\end{equation}
	For each $t \geq 0$, the sequence of random variables on the left-hand side, with $t$ fixed and $n$ varying, is uniformly integrable since it is uniformly bounded:
	\begin{equation}
		\label{aux: gain upper bound}
		\left|\sum_{a \in A} \by_n(t, a)r(a)\right| \leq |S| |A| \max_{i \in S, a \in A} |r(i, a)|.
	\end{equation}
	We conclude that \eqref{aux: weak limit of stationary gain} implies \eqref{eq: limit of stationary reward}, so it suffices to prove \eqref{aux: weak limit of stationary gain}. Because $\bx_n$ is stationary, the following random variables are identically distributed:
	\begin{equation*}
		\sum_{a \in A} \by_n(t, a)r(a) = \sum_{a \in A} \phi_n\left(\bx_n(t), a\right)r(a) \sim \sum_{a \in A} \phi_n(\bx_n(0), a)r(a) = \sum_{a \in A} \by_n(0, a)r(a).
	\end{equation*}
	Hence, it is enough to prove that \eqref{aux: weak limit of stationary gain} holds for $t = 0$.
	
	It follows from \eqref{aux: gain upper bound} that
	\begin{equation*}
		\set{\sum_{a \in A} \by_n(0, a)r(a)}{n \geq 1}
	\end{equation*}
	is tight, and thus every subsequence has a further subsequence that converges weakly. We will establish \eqref{aux: weak limit of stationary gain} for $t = 0$ by showing that the weak limit of every convergent subsequence is $g_r$, i.e., the random variable equal to $g_r$ with probability one.
	
	Suppose that $\calL \subset \N$ and $g_\calL$ are such that
	\begin{equation}
		\label{aux: limit of subsequence}
		\sum_{a \in A} \by_l(0, a)r(a) \Rightarrow g_\calL \quad \text{as} \quad l \to \infty \quad \text{while} \quad l \in \calL.
	\end{equation}
	More specifically, $g_\calL$ is the limit associated with the convergent subsequence indexed in $\calL$, and the subscript indicates that this limit could in principle depend on the specific set of indexes $\calL$. The random variables $\set{\bx_l(0)}{l \in \calL}$ are tight because they are all supported on the compact set $X$. Therefore, we may assume without any loss of generality that there exists a random variable $x_\calL$ such that $\bx_l(0) \Rightarrow x_\calL$ as $l \to \infty$.
	
	Define $\map{\xi}{\R \times \R^S}{\R}$ by
	\begin{equation*}
		\xi(w, x) \defeq w + \sum_{a \in A} \phi(x, a)r(a).
	\end{equation*}
	Note that $\xi$ is a continuous function and satisfies that
	\begin{equation*}
		\sum_{a \in A} \by_l(0, a)r(a) = \xi\left(\sum_{a \in A} \phi_l\left(\bx_l(0), a\right)r(a) - \sum_{a \in A} \phi\left(\bx_l(0), a\right)r(a), \bx_l(0)\right).
	\end{equation*}
	The first argument of $\xi$ converges to zero in probability as $l \to \infty$ by Definition \ref{def: fluid heuristic}, because
	\begin{equation*}
		\left|\sum_{a \in A} \phi_l\left(\bx_l(0), a\right)r(a) - \sum_{a \in A} \phi\left(\bx_l(0), a\right)r(a)\right| \leq \max_{x \in X_l} \norm{\phi_l(x) - \phi(x)}\sum_{a \in A} \norm{r(a)}.
	\end{equation*}
	It follows from \cite[Theorem 3.1]{billingsley1999convergence} and the continuous mapping theorem that
	\begin{equation*}
		\sum_{a \in A} \by_l(0, a)r(a) \Rightarrow \xi\left(0, x_\calL\right) = \sum_{a \in A} \phi(x_\calL, a)r(a) \quad \text{as} \quad l \to \infty \quad \text{while} \quad l \in \calL.
	\end{equation*}
	We conclude from \eqref{aux: limit of subsequence} that the following equality in distribution holds:
	\begin{equation*}
		g_\calL \sim \sum_{a \in A} \phi\left(x_\calL, a\right)r(a).
	\end{equation*}
	
	Suppose that $x_\calL$ is defined on a set $\Omega$, and for each $\omega \in \Omega$ let $\set{\bx(\omega, t), \by(\omega, t)}{t \geq 0}$ be the fluid trajectory starting at $\bx(\omega, 0) = x_\calL(\omega)$. It follows from Proposition \ref{prop: mean-field limit} that $\bx_l(t) \Rightarrow \bx(t)$ as $l \to \infty$ for all $t \geq 0$. Because $\bx_l$ is a stationary process, $\bx_l(t) \sim \bx_l(0)$ for each $t \geq 0$, and this implies that $\bx(t) \sim x_\calL$. We conclude that
	\begin{equation*}
		\sum_{a \in A} \by(t, a)r(a) = \sum_{a \in A} \phi\left(\bx(t), a\right)r(a) \sim \sum_{a \in A} \phi\left(x_\calL, a\right)r(a) \sim g_\calL \quad \text{for all} \quad t \geq 0.
	\end{equation*}
	
	By assumption,
	\begin{equation*}
		\lim_{t \to \infty} \sum_{a \in A} \by(t, a)r(a) = g_r \quad \text{with probability one}.
	\end{equation*}
	In particular, the limit also holds in distribution, which implies that $g_\calL \sim g_r$, i.e., $g_\calL = g_r$ with probability one. This completes the proof of the lemma.
\end{proof}

We can now prove the asymptotic optimality result.

\begin{theorem}
	\label{the: asymptotic optimality}
	Suppose that $\phi$ is continuous and that every fluid trajectory satisfies
	\begin{equation*}
		\lim_{t \to \infty} \sum_{a \in A} \by(t, a) r(a) = g_r,
	\end{equation*}
	regardless of the initial condition. For arbitrary and possibly random vectors $\bx_n(0)$ of initial state frequencies, the gain achieved by the discrete controls $\phi_n$ approaches the optimal value $g_r$ of the fluid relaxation as $n \to \infty$. Specifically,
	\begin{equation*}
		\lim_{n \to \infty} \lim_{T \to \infty} \frac{1}{T}\sum_{t = 0}^{T - 1} \sum_{a \in A} \conde*{\by_n(t, a)}r(a) = g_r.
	\end{equation*}
\end{theorem}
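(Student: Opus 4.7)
The plan is to reduce the theorem to Lemma \ref{lem: stationary rewards} by a standard Cesaro-averaging argument. The key observation is that for each fixed $n$ the state-frequency process $\bx_n$ lives on the finite set $X_n$, so a classical fact about finite-state Markov chains tells us that the time-averaged laws
\[
\mu_n^T \defeq \frac{1}{T}\sum_{t = 0}^{T - 1} \mathrm{Law}\!\left(\bx_n(t)\right)
\]
converge in total variation to some stationary distribution $\bar\mu_n$ of the transition kernel of $\bx_n$. This holds for an arbitrary, possibly random, initial condition $\bx_n(0)$; the particular $\bar\mu_n$ obtained may depend on that initial condition when the chain has several recurrent classes, but in every case $\bar\mu_n$ is a genuine stationary distribution. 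Since $x \mapsto \sum_a \phi_n(x, a) r(a)$ is bounded on the finite set $X_n$, passing to the limit $T \to \infty$ in the quantity of interest yields
\[
\lim_{T \to \infty} \frac{1}{T}\sum_{t = 0}^{T - 1} \sum_{a \in A} \conde*{\by_n(t, a)} r(a) = \sum_{a \in A} \sum_{x \in X_n} \bar\mu_n(x)\phi_n(x, a)r(a) \eqdef g_n^\ast.
\]

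Next I would interpret $g_n^\ast$ dynamically: if a fresh copy of $\bx_n$ were initialised at the stationary law $\bar\mu_n$, then by stationarity $\sum_{a} \conde*{\by_n(0, a)} r(a)$ would equal $g_n^\ast$. This is precisely the quantity controlled by Lemma \ref{lem: stationary rewards}, and applying that lemma to the sequence of stationary initialisations $\{\bar\mu_n\}_{n \geq 1}$ gives $g_n^\ast \to g_r$ as $n \to \infty$. Chaining this with the previous display completes the proof.

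The one point deserving a word of care is that the stationary laws $\bar\mu_n$ to which Lemma \ref{lem: stationary rewards} is applied are not prescribed in advance but arise as Cesaro limits associated with the original (possibly random) initial condition. This causes no difficulty, since the proof of Lemma \ref{lem: stationary rewards} uses only that the initial laws are supported on the compact set $X$ (yielding tightness) together with stationarity, and so it applies verbatim to any such sequence of stationary distributions. I do not foresee a substantive obstacle here: the Cesaro-convergence step is entirely classical, and the essential analytical content --- the mean-field limit of Proposition \ref{prop: mean-field limit} combined with the interchange-of-limits and subsequence argument --- has already been absorbed into Lemma \ref{lem: stationary rewards}.
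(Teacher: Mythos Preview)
Your proposal is correct and follows essentially the same route as the paper. The paper invokes \cite[Proposition 8.1.1]{puterman2014markov} to produce a stationary distribution $x_n^s$ (the appropriate mixture of rows of the limiting matrix of the chain $\bx_n$) such that the Ces\`aro-averaged gain equals $\sum_{a}\conde*{\phi_n(x_n^s,a)}r(a)$, and then applies Lemma~\ref{lem: stationary rewards} to this stationary initialisation --- exactly the two steps you describe, only with Puterman's result playing the role of your ``time-averaged laws $\mu_n^T$ converge in total variation'' observation.
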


\begin{proof}
	By \cite[Proposition 8.1.1]{puterman2014markov}, there exist random vectors $x_n^s$ such that the distribution of $x_n^s$ is stationary for the Markov chain $\bx_n$ and satisfies that
	\begin{equation*}
		\lim_{T \to \infty} \frac{1}{T}\sum_{t = 0}^{T - 1} \sum_{a \in A} \conde*{\by_n(t, a)}r(a) = \sum_{a \in A} \conde*{\phi_n\left(x_n^s, a\right)}r(a).
	\end{equation*}
	More precisely, the distribution of $x_n^s$ is a convex combination of the rows of the limiting matrix associated with the Markov chain $\bx_n$. If $\bx_n(0)$ is deterministic, then the distribution of $x_n^s$ is given by the row of the limiting matrix associated with state $\bx_n(0)$. In general, the distribution of $x_n^s$ is the convex combination of the rows such that the weight of the row associated with state $x$ is the probability that $\bx_n(0) = x$. The law of $x_n^s$ is stationary because the rows of the limiting matrix define stationary distributions.
	
	Consider now the stationary Markov chains $\bx_n$ with initial distributions $x_n^s$. Then
	\begin{equation*}
		\lim_{n \to \infty} \sum_{a \in A} \conde*{\phi_n\left(x_n^s, a\right)}r(a) = \lim_{n \to \infty} \sum_{a \in A} \conde*{\by_n(0, a)}r(a) = g_r
	\end{equation*}
	by Lemma \ref{lem: stationary rewards}, which completes the proof.
\end{proof}

The following corollary is a straightforward consequence of Theorem \ref{the: asymptotic optimality}.

\begin{corollary}
	\label{cor: asymptotic optimality}
	Let $y^*$ be an optimal solution of the fluid relaxation \eqref{pr: fluid relaxation}, and suppose that $\phi$ is continuous and such that all fluid trajectories satisfy that
	\begin{equation}
		\label{eq: global attractivity}
		\lim_{t \to \infty} \by(t) = y^*,
	\end{equation}
	regardless of the initial condition. For arbitrary and possibly random initial state frequency vectors $\bx_n(0)$, the limit of the gain achived by $\phi_n$ is given by:
	\begin{equation*}
		\lim_{n \to \infty} \lim_{T \to \infty} \frac{1}{T}\sum_{t = 0}^{T - 1} \sum_{a \in A} \conde*{\by_n(t, a)}r(a) = g_r.
	\end{equation*}
\end{corollary}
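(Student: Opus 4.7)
The plan is to deduce the corollary directly from Theorem \ref{the: asymptotic optimality} by checking its hypothesis. Concretely, I must verify that the condition \eqref{eq: global attractivity}, namely $\by(t) \to y^*$, implies the weaker scalar convergence
\[
\lim_{t \to \infty} \sum_{a \in A} \by(t, a) r(a) = g_r
\]
required by Theorem \ref{the: asymptotic optimality}.

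First I would observe that since $y^*$ is an optimal solution of the fluid relaxation \eqref{pr: fluid relaxation}, its objective value equals the optimum, so $\sum_{a \in A} y^*(a) r(a) = g_r$ by definition of $g_r$. Second, the map $y \mapsto \sum_{a \in A} y(a) r(a)$ from $\R^{S \times A}$ to $\R$ is linear and hence continuous, so the assumed componentwise convergence $\by(t) \to y^*$ immediately yields
\[
\lim_{t \to \infty} \sum_{a \in A} \by(t, a) r(a) = \sum_{a \in A} y^*(a) r(a) = g_r
\]
for every fluid trajectory, regardless of its initial condition.

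At this point the hypothesis of Theorem \ref{the: asymptotic optimality} is satisfied with the same $\phi$ and the same discrete controls $\phi_n$. Invoking that theorem with arbitrary (possibly random) initial vectors $\bx_n(0)$ yields the claimed identity
\[
\lim_{n \to \infty} \lim_{T \to \infty} \frac{1}{T}\sum_{t = 0}^{T - 1} \sum_{a \in A} \conde*{\by_n(t, a)}r(a) = g_r,
\]
concluding the proof. There is no real obstacle here: the entire content of the corollary is the trivial implication that convergence of the state-action frequency vector along fluid trajectories implies convergence of its inner product with the reward vector. All the hard work, including the mean-field limit of Proposition \ref{prop: mean-field limit} and the uniform integrability and subsequence-selection arguments used in Lemma \ref{lem: stationary rewards}, has already been carried out in the proof of Theorem \ref{the: asymptotic optimality}.
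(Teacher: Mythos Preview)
Your proposal is correct and follows exactly the paper's approach: the paper's proof simply observes that since $y^*$ is optimal for the fluid relaxation and $\by(t) \to y^*$ for every fluid trajectory, the hypothesis of Theorem~\ref{the: asymptotic optimality} holds, and then invokes that theorem. Your argument is the same, only spelling out explicitly the continuity of the linear reward functional and the identity $\sum_{a \in A} y^*(a) r(a) = g_r$.
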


\begin{proof}
	Since $y^*$ is an optimal solution of the fluid relaxation and $\by(t) \to y^*$ as $t \to \infty$ for all fluid trajectories, the assumptions of Theorem \ref{the: asymptotic optimality} hold.
\end{proof}

In the context of multi-armed restless bandits, global attractivity properties that are analogous to \eqref{eq: global attractivity} have been used to establish the asymptotic optimality of the Whittle index and the LP-priority policies, in \cite{weber1990index,gast2023exponential} and \cite{verloop2016asymptotically,gast2023linear}, respectively; further conditions are also required, such as the single-arm and multi-arm problems being unichain. These global attractivity properties must be checked for each problem instance and there are remarkably simple instances where the property fails. One example, where the single-arm problem has three states and is ergodic, is given in \cite{gast2020exponential} and is considered in Section \ref{sec: examples}.

In the following section we will construct continuous fluid controls that satisfy \eqref{eq: global attractivity} when a technical condition holds. Unlike the global attractivity properties required by the Whittle index and LP-priority policies, our condition holds automatically for a broad class of problems. In particular, the condition always holds for multi-armed restless bandit problems where the unconstrained single-arm problem admits a policy that is unichain, aperiodic and such that its unique irreducible class contains the set defined below.

\begin{definition}
	\label{def: good states}
	Given an optimal solution $y^*$ of the fluid relaxation \eqref{pr: fluid relaxation}, let
	\begin{equation}
		\label{eq: definition of x^* and S_+^*}
		x^* = \sum_{a \in A} y^*(a) \quad \text{and} \quad S_+^* \defeq \set{i \in S}{x^*(i) > 0}.
	\end{equation}
\end{definition}

We emphasize that the above-mentioned policy for the single-arm problem need not comply with the constraints in \eqref{eq: constraints} nor maximize the accumulated reward in any way. The only requirements are that the policy is unichain, aperiodic and such that all transient states are outside of $S_+^*$. Moreover, we establish that such a policy exists if and only if the policy that selects actions uniformly at random has the latter properties. Therefore, our asymptotic optimality condition is straightforward to check.

\section{Fluid controls}
\label{sec: fluid controls}

Consider an optimal solution $y^*$ of the fluid relaxation and define
\begin{equation*}
	L(y) \defeq \sum_{a \in A} y(a)P(a) \quad \text{and} \quad x^* \defeq \sum_{a \in A} y^*(a) = L\left(y^*\right) \in X.
\end{equation*}
Note that $\map{L}{\R^{S \times A}}{\R^S}$ is linear and that the latter equality holds since $y^*$ is a solution of the fluid relaxation. In addition, define $\map{\beta}{\R_+^S}{[0, \infty)}$ by
\begin{equation*}
	\beta(x) \defeq \max \set{\lambda \geq 0}{\lambda x^* \preceq x}.
\end{equation*}

We prove the next lemma in Appendix \ref{app: proofs of various results}.

\begin{lemma}
	\label{lem: function beta}
	The following properties hold.
	\begin{enumerate}
		\item[(a)] If $x \in X$ and $x \neq x^*$, then $\left[1 - \beta(x)\right]^{-1}\left[x - \beta(x)x^*\right] \in X$.
		
		\item[(b)] The function $\beta$ is continuous.
	\end{enumerate}
\end{lemma}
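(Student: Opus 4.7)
The plan is to first obtain an explicit formula for $\beta$ and then derive both claims directly from it.

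First I would observe that since $x^* \in X$ satisfies $x^* e = 1$, the set $S_+^* = \{i \in S : x^*(i) > 0\}$ is nonempty, and the constraint $\lambda x^* \preceq x$ is equivalent to $\lambda \leq x(i)/x^*(i)$ for every $i \in S_+^*$ (the coordinates $i \notin S_+^*$ impose no restriction on $\lambda \geq 0$). Hence
\begin{equation*}
    \beta(x) = \min_{i \in S_+^*} \frac{x(i)}{x^*(i)} \quad \text{for all} \quad x \in \R_+^S.
\end{equation*}

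For part (a), I would first argue that $\beta(x) < 1$ whenever $x \in X$ and $x \neq x^*$. Indeed, by the definition of $\beta$ we have $\beta(x) x^* \preceq x$, and since $x, x^* \in X$ both sum to one, summing the inequality yields $\beta(x) \leq 1$. If $\beta(x) = 1$, then $x - x^* \succeq 0$ is a vector with zero coordinate sum, forcing $x = x^*$, a contradiction. Thus $1 - \beta(x) > 0$ and the expression $\left[1 - \beta(x)\right]^{-1}\left[x - \beta(x) x^*\right]$ is well-defined. By the definition of $\beta$, $x - \beta(x) x^* \succeq 0$, so the vector has nonnegative coordinates; and $(x - \beta(x) x^*)e = 1 - \beta(x)$, so after dividing by $1 - \beta(x)$ the coordinates sum to one. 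Nonnegative coordinates summing to one lie in $[0, 1]^S$, so the vector lies in $X$.

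For part (b), continuity follows immediately from the explicit formula: $\beta$ is the pointwise minimum of the finite family of continuous maps $x \mapsto x(i)/x^*(i)$ indexed by $i \in S_+^*$ (each denominator $x^*(i)$ is a positive constant), hence continuous on $\R_+^S$.

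I do not foresee a serious obstacle; the only subtlety is the strict inequality $\beta(x) < 1$ in part (a), which relies crucially on $x$ and $x^*$ belonging to the simplex $X$ (and not merely to $\R_+^S$), and on the hypothesis $x \neq x^*$.
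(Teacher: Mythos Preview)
Your proof is correct. For part (a) you argue exactly as the paper does: use $\beta(x)x^* \preceq x$ and the fact that both $x$ and $x^*$ sum to one to get $\beta(x) \leq 1$, rule out equality via $x \neq x^*$, and conclude that the rescaled residual lies in $X$.

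For part (b) you take a slightly different route. You first derive the explicit formula $\beta(x) = \min_{i \in S_+^*} x(i)/x^*(i)$ and then invoke the fact that a pointwise minimum of finitely many continuous functions is continuous. The paper instead works directly from the definition of $\beta$ as a supremum and checks continuity topologically, showing that the preimages $\beta^{-1}((\lambda,\infty))$ and $\beta^{-1}([\lambda,\infty))$ are open and closed in $\R_+^S$ respectively, so that preimages of all basic open subsets of $[0,\infty)$ are open. Your argument is more direct and arguably more transparent; the paper's argument avoids writing down the explicit formula but is otherwise equivalent in strength. Neither approach buys anything the other does not.
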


Given an auxiliary  fluid control $\psi$, we can define another fluid control $\phi$ by letting
\begin{equation}
	\label{eq: fluid control structure}
	\phi(x) \defeq \beta(x) y^* + \left[1 - \beta(x)\right] \psi\left(\left[1 - \beta(x)\right]^{-1}\left[x - \beta(x)x^*\right]\right) \quad \text{for all} \quad x \in X;
\end{equation}
here $\phi\left(x^*\right) \defeq y^*$, i.e., the second term on the right-hand side is zero if $\beta(x) = 1$. It is straightforward to check that $\phi$ is indeed a fluid control when $\psi$ is.

\begin{remark}
	\label{rem: comparison with yan}
	The \emph{align and steer} policy considered in \cite{yan2024optimal} has the same structure as the fluid controls $\phi$ defined by \eqref{eq: fluid control structure}; the \emph{maximum alignment coefficient} and $\pi_{\text{steer}}$ policy in \cite{yan2024optimal} correspond to our $\beta$ and $\psi$, respectively. However, the results in \cite{yan2024optimal} concern multi-armed restless bandits and assume that $\pi_{\text{steer}}$ has a specific form which is linear, whereas we study problems with multiple actions and constraints and let $\psi$ be general and possibly nonlinear. Under these general assumptions, we provide below a necessary and sufficient condition on $\psi$ for the global attractivity property \eqref{eq: global attractivity} to hold. 
\end{remark}

Note that every fluid trajectory satisfies that
\begin{equation*}
	\by(t) = \phi\left(\bx(t)\right) \quad \text{and} \quad \bx(t + 1) = L\left(\by(t)\right) = L\left(\phi\left(\bx(t)\right)\right) \quad \text{for all} \quad t \geq 0.
\end{equation*}
Furthermore, \eqref{eq: global attractivity} holds if and only if $\bx(t) \to x^*$ as $t \to \infty$, and the latter property holds if and only if $\beta\left(\bx(t)\right) \to 1$. In the next sections we provide conditions on $\psi$ for these properties to hold. First we consider problems with general constraints and then we provide more direct conditions for specific sets of constraints.

\subsection{General constraints}
\label{sub: general constraints}

We adopt the following notation for the composition of functions:
\begin{equation*}
	\eta \circ \zeta(x) \defeq \eta\left(\zeta(x)\right), \quad \zeta^0(x) \defeq x \quad \text{and} \quad \zeta^k(x) \defeq \zeta \circ \zeta^{k - 1}(x) \quad \text{for all} \quad k \geq 1,
\end{equation*}
whenever the expressions on the right-hand sides are well-defined.

\begin{condition}
	\label{cond: general}
	The fluid control $\psi$ is continuous. Moreover, consider the set
	\begin{equation*}
		Z \defeq \set{x \in X}{\beta(x) = 0} = \set{x \in X}{x(i) = 0\ \text{for some}\ i \in S_+^*}.
	\end{equation*}
	For each $z \in Z$, there exists $t \geq 1$ such that $\left(L \circ \psi\right)^t(z) \notin Z$.
\end{condition}

Observe that the assumptions of Corollary \ref{cor: asymptotic optimality} cannot be satisfied if the fluid control does not satisfy Condition \ref{cond: general}. Indeed, a fluid control that does not satisfy Condition \ref{cond: general} is either discontinuous or such that some fluid trajectory is contained in $Z$ and thus does not satisfy \eqref{eq: global attractivity}. The following theorem proves that the existence of a fluid control that satisfies Condition \ref{cond: general} is not only necessary for Corollary \ref{cor: asymptotic optimality} but also sufficient.

\begin{theorem}
	\label{the: optimal fluid control general case}
	The existence of a fluid control that satisfies Condition \ref{cond: general} is necessary and sufficient for Corollary \ref{cor: asymptotic optimality}. Moreover, if $\psi$ satisfies Condition \ref{cond: general} and $\phi$ is as in \eqref{eq: fluid control structure}, then $\phi$ is a continuous fluid control such that \eqref{eq: global attractivity} holds, and thus Corollary \ref{cor: asymptotic optimality} holds.
\end{theorem}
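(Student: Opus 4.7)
The plan is to prove sufficiency (the substantive direction) by verifying three properties of the $\phi$ built via \eqref{eq: fluid control structure}: continuity, the fluid-control conditions, and the global attractivity \eqref{eq: global attractivity}. Once these are in hand, Corollary \ref{cor: asymptotic optimality} applies. Necessity is cheap: if some continuous fluid control $\phi$ forces every fluid trajectory to converge to $x^*$, then $\phi$ itself already serves as the $\psi$ in Condition \ref{cond: general}, since for any $z \in Z$ the trajectory $(L \circ \phi)^t(z)$ converges to $x^* \notin Z$ and must therefore leave the closed set $Z$ after finitely many iterations.

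Continuity of $\phi$ on $X \setminus \{x^*\}$ follows from Lemma \ref{lem: function beta} (continuity of $\beta$ and the fact that $[1-\beta(x)]^{-1}[x-\beta(x)x^*] \in X$) together with the continuity of $\psi$; at $x^*$ the continuity of $\beta$ forces $\beta(x) \to 1$, so the first term in \eqref{eq: fluid control structure} tends to $y^*$ while the bounded second term vanishes. That $\phi$ is a fluid control is a short computation using $\sum_{a \in A} y^*(a) = x^*$ and the fluid-control identity $\sum_{a \in A} \psi(\tilde x, a) = \tilde x$ for $\tilde x := [1-\beta(x)]^{-1}(x - \beta(x)x^*)$; since both $y^*$ and $\psi(\tilde x)$ satisfy the linear equality and inequality constraints, so does their convex combination.

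The heart of the proof is global attractivity. Fix a fluid trajectory $\bx$ and set $\beta_t := \beta(\bx(t))$. If $\bx(t_0) = x^*$ for some $t_0$ the claim is immediate, so I assume $\bx(t) \neq x^*$ for all $t$ and set $\tilde{\bx}(t) := [1-\beta_t]^{-1}(\bx(t) - \beta_t x^*) \in X$. Linearity of $L$ and $L(y^*) = x^*$ give
\begin{equation*}
\bx(t+1) = \beta_t x^* + (1-\beta_t)\, v_t, \qquad v_t := L\bigl(\psi(\tilde{\bx}(t))\bigr),
\end{equation*}
from which the definition of $\beta$ produces the key recursion $\beta_{t+1} = \beta_t + (1-\beta_t)\beta(v_t)$ and $\tilde{\bx}(t+1) = P(v_t)$, where $P(x) := [1-\beta(x)]^{-1}(x - \beta(x)x^*)$ is continuous on $X \setminus \{x^*\}$, satisfies $P(z) = z$ for $z \in Z$, and (by maximality of $\beta$) maps $X \setminus \{x^*\}$ into $Z$. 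In particular $\beta_t$ is nondecreasing and has a limit $\beta_\infty \in [0,1]$; also $\tilde{\bx}(t) \in Z$ for all $t \geq 1$.

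The main obstacle is ruling out $\beta_\infty < 1$; I will argue by contradiction. Under this assumption the recursion forces $\beta(v_t) \to 0$, and compactness of $Z$ yields a subsequence $\tilde{\bx}(t_k) \to \tilde{x}_\infty \in Z$. I then argue by induction on $s \geq 0$ that $\tilde{\bx}(t_k + s) \to (L \circ \psi)^s(\tilde{x}_\infty)$ and that $(L \circ \psi)^s(\tilde{x}_\infty) \in Z$ for every $s \geq 1$. The continuity of $L \circ \psi$ propagates the limit from step $s$ to step $s+1$; the vanishing of $\beta(v_{t_k + s})$ together with continuity of $\beta$ forces $(L \circ \psi)^{s+1}(\tilde{x}_\infty) \in Z$; and because $P$ is the identity on $Z$, the recursion $\tilde{\bx}(t+1) = P(v_t)$ reduces to $\tilde{\bx}(t_k + s + 1) \to (L \circ \psi)^{s+1}(\tilde{x}_\infty)$, closing the induction. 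The conclusion $(L \circ \psi)^s(\tilde{x}_\infty) \in Z$ for all $s \geq 1$ directly contradicts Condition \ref{cond: general} applied at $\tilde{x}_\infty \in Z$, completing the proof.
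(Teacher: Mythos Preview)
Your proof is correct and shares the overall skeleton with the paper's: verify $\phi$ is a continuous fluid control, observe $\beta_t$ is nondecreasing, assume $\beta_\infty < 1$, extract a subsequential limit, and contradict Condition~\ref{cond: general}. The contradiction step, however, is executed differently. You derive the exact recursion $\beta_{t+1} = \beta_t + (1-\beta_t)\beta(v_t)$ and $\tilde{\bx}(t+1) = P(v_t)$ with $P(x) = [1-\beta(x)]^{-1}(x-\beta(x)x^*)$, then take a subsequential limit $\tilde{x}_\infty$ of the normalized trajectory $\tilde{\bx}(t)$ and use $\beta(v_t)\to 0$ together with $P|_Z = \mathrm{id}$ to propagate the limit through the recursion and conclude $(L\circ\psi)^s(\tilde{x}_\infty)\in Z$ for all $s$. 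The paper instead takes a subsequential limit $x_\gamma$ of the original trajectory $\bx(t)$, launches a \emph{new} fluid trajectory $\bx_\gamma$ from $x_\gamma$, argues that $\beta(\bx_\gamma(t))\equiv\gamma$ for all $t$ (otherwise the monotone limit along the original trajectory would exceed $\gamma$), and then observes that the normalized version $\bz(t) = (1-\gamma)^{-1}[\bx_\gamma(t)-\gamma x^*]$ satisfies the clean iteration $\bz(t+1) = L\circ\psi(\bz(t))$ exactly. The paper's route avoids the projection $P$ and the inductive limit argument; your route avoids the auxiliary fluid trajectory and the side argument that $\beta(\bx_\gamma(t))$ cannot increase. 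Both are short and equally valid.
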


\begin{proof}
	As noted earlier, it is straightforward to check that $\phi$ is a fluid control. Further, Lemma \ref{lem: function beta} and Condition \ref{cond: general} imply that $\phi$ is continuous at $x$ for all $x \neq x^*$. In order to prove that $\phi$ is continuous at $x^*$, suppose that $x_k \to x^*$ as $k \to \infty$ with $x_k \in X \setminus \left\{x^*\right\}$. Then
	\begin{equation*}
		\lim_{k \to \infty} \beta\left(x_k\right) = 1 \quad \text{and} \quad \norm{\psi\left(\left[1 - \beta(x_k)\right]^{-1}\left[x_k - \beta(x_k)x^*\right]\right)}_1 = 1 \quad \text{for all} \quad k \geq 1.
	\end{equation*}
	It follows that $\phi(x_k) \to y^* = \phi\left(x^*\right)$ as $k \to \infty$, and thus $\phi$ is continuous.
	
	It only remains to prove that every fluid trajectory satisfies that $\by(t) \to y^*$ as $t \to \infty$, and for this purpose it suffices to show that $\beta\left(\bx(t)\right) \to 1$ as $t \to \infty$. We will do this by assuming that the latter property does not hold and arriving to a contradiction.
	
	Suppose then that $\beta\left(\bx(t)\right) \nrightarrow 1$ as $t \to \infty$ for some fluid trajectory, and note that the function $t \mapsto \beta\left(\bx(t)\right)$ is nondecreasing since
	\begin{align*}
		\bx(t + 1) &= L\circ\phi\left(\bx(t)\right) \\
		&= \beta\left(\bx(t)\right)x^* + L\left(\left[1 - \beta\left(\bx(t)\right)\right] \psi\left(\left[1 - \beta\left(\bx(t)\right)\right]^{-1}\left[\bx(t) - \beta\left(\bx(t)\right)x^*\right]\right)\right),
	\end{align*}
	where the last term is in $X \subset \R_+^S$. Therefore, there exists
	\begin{equation}
		\label{aux: definition of gamma}
		\gamma \defeq \lim_{t \to \infty} \beta\left(\bx(t)\right) < 1.
	\end{equation}
	
	Because $X$ is a compact set, there exist $x_\gamma \in X$ and some sequence $\set{t_k \in \N}{k \geq 1}$ such that we have $\bx\left(t_k\right) \to x_\gamma$ as $k \to \infty$. Furthermore,
	\begin{equation*}
		\beta\left(x_\gamma\right) = \lim_{k \to \infty} \beta\left(\bx\left(t_k\right)\right) = \gamma.
	\end{equation*}
	Let $\set{\bx_\gamma(t), \by_\gamma(t)}{t \in \N}$ be the fluid trajectory such that $\bx_\gamma(0) = x_\gamma$. Then
	\begin{equation}
		\label{aux: stucked at gamma}
		\beta\left(\bx_\gamma(t)\right) = \gamma \quad \text{for all} \quad t \in \N.
	\end{equation}
	Indeed, assume that $\beta\left(\bx_\gamma(t)\right) > \gamma$ for some $t \geq 0$, and note that
	\begin{equation*}
		\lim_{k \to \infty} \bx\left(t + t_k\right) = \lim_{k \to \infty} \left(L \circ \phi\right)^t \left(\bx\left(t_k\right)\right) = \left(L \circ \phi\right)^t \left(x_\gamma\right) = \bx_\gamma(t)
	\end{equation*}
	because $\phi$ is continuous. If \eqref{aux: stucked at gamma} failed, then \eqref{aux: definition of gamma} would fail as well since
	\begin{equation*}
		\lim_{k \to \infty} \beta\left(\bx\left(t + t_k\right)\right) = \beta\left(\bx_\gamma(t)\right) > \gamma.
	\end{equation*}
	
	Define $\bz(t) \defeq \left(1 - \gamma\right)^{-1}\left[\bx_\gamma(t) - \gamma x^*\right] \in Z$ for all $t \in \N$. Then
	\begin{align*}
		\bz(t + 1) &= \left(1 - \gamma\right)^{-1}\left[\bx_\gamma(t + 1) - \gamma x^*\right] \\
		&= \left(1 - \gamma\right)^{-1}\left[L\circ\phi\left(\bx_\gamma(t)\right) - \gamma x^*\right] \\
		&= \left(1 - \gamma\right)^{-1}\left[\gamma x^* + L\left(\left(1 - \gamma\right)\psi\left(\left(1 - \gamma\right)^{-1}\left[\bx_\gamma(t) - \gamma x^*\right]\right)\right) - \gamma x^*\right] \\
		&= L \circ \psi\left(\bz(t)\right) = \left(L\circ\psi\right)^{t + 1}\left(\bz(0)\right) \quad \text{for all} \quad t \in \N,
	\end{align*}
	where we used the linearity of $L$. Condition \ref{cond: general} implies that $\bz(t) \notin Z$ for some $t \geq 1$, but this is only possible if $\beta\left(\bx_\gamma(t)\right) > \gamma$ and thus contradicts \eqref{aux: stucked at gamma}.
\end{proof}

The following definition can be helpful for checking Condition \ref{cond: general}.

\begin{definition}
	\label{def: fluid control based on single-process policy}
	Let $\pi \defeq \set{\condr{\pi}*{a | i} \in [0, 1]}{i \in S, a \in A}$ be a set of constants such that
	\begin{equation*}
		\sum_{a \in A} \condr{\pi}*{a | i} = 1 \quad \text{for all} \quad i \in S.
	\end{equation*}
	Consider diagonal matrices $\set{D_\pi(a)}{a \in A}$ such that $D_\pi(i, i, a) \defeq \condr{\pi}*{a | i}$ for all $i \in S$ and $a \in A$. We may define a function $\map{\psi_1}{X}{Y}$ by letting
	\begin{equation*}
		\psi_1(x, a) \defeq xD_\pi(a) \quad \text{for all} \quad x \in X \quad \text{and} \quad a \in A,
	\end{equation*}
	where $\psi_1(x, a)$ is a short-hand notation for $y(a)$ when $\psi_1(x) = y$. The fluid control $\psi$ is purely based on $\pi$ if $\psi = \psi_1$, and we say that $\psi$ is partially based on $\pi$ if there exist a constant $\gamma \in (0, 1)$ and a function $\map{\psi_2}{X}{Y}$ such that
	\begin{equation}
		\label{eq: partially based}
		\psi(x) = \gamma \psi_1(x) + (1 - \gamma)\psi_2(x) \quad \text{for all} \quad x \in X.
	\end{equation}
\end{definition}

The set of constants $\pi$ corresponds to a stationary policy for a single-process problem without constraints. In particular, $\condr{\pi}*{a | i}$ can be interpreted as the probability that action $a$ is selected whenever the process is in state $i$. If $\psi$ is purely based on $\pi$, then
\begin{equation*}
	x\sum_{a \in A} D_\pi(a)C(a) = d \quad \text{and} \quad x\sum_{a \in A} D_\pi(a)E(a) \preceq f 
\end{equation*}
must hold for all $x \in X$ for $\psi$ to be a fluid control. However, these conditions need not hold when $\psi$ is only partially based on $\pi$ as in \eqref{eq: partially based} because the function $\psi_2$ can help to satisfy the conditions in Definition \ref{def: fluid control}, particularly the constraints of the problem.

\begin{condition}
	\label{cond: fluid control partially based on pi}
	The fluid control $\psi$ is continuous and partially based on a set of constants $\pi$ as in Definition \ref{def: fluid control based on single-process policy}. Consider the matrix $P_\pi$ such that
	\begin{equation*}
		P_\pi(i, j) \defeq \sum_{a \in A} \condr{\pi}*{a | i} \condr{p}*{j | i, a} \quad \text{for all} \quad i, j \in S.
	\end{equation*}
	This matrix is unichain, aperiodic and such that $S_+^*$ is inside the unique irreducible class. 
\end{condition}

\begin{remark}
	\label{rem: existence of single-process policy}
	Suppose that a policy $\pi$ for the unconstrained single-process problem has the above properties and consider the policy $\nu$ that always selects each action with the same probability, regardless of the current state. It is easy to check that this policy also has the properties listed in Condition \ref{cond: fluid control partially based on pi} since the arrows in the transitions diagram for $\pi$ are a subset of the arrows in the transitions diagram for $\nu$. Therefore, there exists a policy $\pi$ that is unichain, aperiodic and such that the unique irreducible class contains $S_+^*$ if and only if the specific policy $\nu$ defined earlier has these properties.
\end{remark}

The following corollary follows from Theorem \ref{the: optimal fluid control general case}.

\begin{corollary}
	\label{cor: optimal fluid control partially based on some pi}
	Let $\psi$ be a fluid control satisfying Condition \ref{cond: fluid control partially based on pi} and $\phi$ be as in \eqref{eq: fluid control structure}. Then $\phi$ satisfies the assumptions of Corollary \ref{cor: asymptotic optimality}.
\end{corollary}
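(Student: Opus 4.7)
The plan is to reduce the corollary to Theorem~\ref{the: optimal fluid control general case} by verifying that $\psi$ satisfies Condition~\ref{cond: general}. Continuity of $\psi$ is part of Condition~\ref{cond: fluid control partially based on pi}, so what remains is the combinatorial statement that for every $z \in Z$ there exists $t \geq 1$ with $(L \circ \psi)^t(z) \notin Z$. Once this is established, Theorem~\ref{the: optimal fluid control general case} gives continuity of $\phi$ and the global attractivity \eqref{eq: global attractivity}, hence the hypotheses of Corollary~\ref{cor: asymptotic optimality}.

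The key observation I would exploit is that the decomposition $\psi = \gamma \psi_1 + (1 - \gamma)\psi_2$ with $\psi_1(x, a) = xD_\pi(a)$ yields
\begin{equation*}
	L\left(\psi(x)\right) = \gamma \sum_{a \in A} xD_\pi(a)P(a) + (1 - \gamma)\sum_{a \in A} \psi_2(x, a)P(a) = \gamma x P_\pi + (1 - \gamma)\sum_{a \in A} \psi_2(x, a)P(a),
\end{equation*}
since $P_\pi(i, j) = \sum_{a \in A} \condr{\pi}*{a | i}\condr{p}*{j | i, a}$ is precisely the $(i, j)$ entry of $\sum_{a \in A} D_\pi(a)P(a)$. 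The second term is componentwise nonnegative because $\psi_2$ and the matrices $P(a)$ take nonnegative values, so $L \circ \psi(x) \succeq \gamma xP_\pi$ for every $x \in X$.

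Next I would iterate this bound. Setting $w_0 \defeq z$ and $w_{t + 1} \defeq L \circ \psi(w_t)$, a simple induction on $t$, using nonnegativity of $P_\pi$ to propagate the componentwise inequality, yields $w_t \succeq \gamma^t z P_\pi^t$ for all $t \geq 0$. Since $P_\pi$ is unichain and aperiodic and $S_+^*$ is contained in the unique irreducible class, standard Markov chain theory gives $P_\pi^t(i, j) > 0$ for all $i \in S$ and $j \in S_+^*$ once $t$ is sufficiently large. Picking any $i_0 \in S$ with $z(i_0) > 0$ (which exists because $ze = 1$), we obtain $\left(zP_\pi^t\right)(j) \geq z(i_0)P_\pi^t(i_0, j) > 0$ for all $j \in S_+^*$ and $t$ large enough. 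Therefore $(L \circ \psi)^t(z) = w_t \succeq \gamma^t z P_\pi^t$ has strictly positive entries on $S_+^*$, so $(L \circ \psi)^t(z) \notin Z$, establishing Condition~\ref{cond: general}.

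No step is really a serious obstacle; the only point requiring care is propagating the componentwise inequality through iterations, which relies on the nonnegativity of $P_\pi$ and of the auxiliary term $(1 - \gamma)\sum_{a} \psi_2(\cdot, a)P(a)$. It is precisely here that the assumption $\gamma \in (0, 1)$ (rather than $\gamma = 0$) is essential: a purely auxiliary $\psi_2$ would give no mixing lower bound, while the $\gamma$-fraction injected from $\psi_1$ makes every fluid trajectory inherit the long-run positivity of the unichain aperiodic chain $P_\pi$ on $S_+^*$.
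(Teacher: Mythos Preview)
Your proposal is correct and follows essentially the same route as the paper: both reduce to Theorem~\ref{the: optimal fluid control general case} by verifying Condition~\ref{cond: general}, and both exploit the lower bound $(L\circ\psi)^t(z)\succeq\gamma^t z P_\pi^t$ together with the unichain--aperiodic structure of $P_\pi$ to force eventual positivity on $S_+^*$. The only cosmetic difference is that the paper writes an exact decomposition $(L\circ\psi)^k(z)=\gamma^k z P_\pi^k+(1-\gamma)w_k(z)$ with $w_k(z)\in\R_+^S$ and tracks the remainder explicitly through the induction, whereas you phrase the same content as a componentwise inequality; your version is slightly leaner but carries identical information.
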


\begin{proof}
	It is enough to establish that Condition \ref{cond: fluid control partially based on pi} implies Condition \ref{cond: general}. For this purpose we will prove that for all $z \in X$ and $k \geq 0$, we have
	\begin{equation}
		\label{aux: composition of l and psi}
		\left(L \circ \psi\right)^k(z) = \gamma^k z P_\pi^k + (1 - \gamma) w_k(z) \quad \text{with} \quad w_k(z) \in \R_+^S.
	\end{equation}
	Condition \ref{cond: fluid control partially based on pi} implies that $P_\pi$ has a unique stationary distribution $x_\pi \in X$ that assigns positive probability to all $i \in S_+^*$. Furthermore, $z P_\pi^k \to x_\pi$ as $k \to \infty$ for each initial distribution $z \in X$. Therefore, there always exists $k \geq 1$ such that the right-hand side of \eqref{aux: composition of l and psi} assigns positive probability to all $i \in S_+^*$, and thus $\left(L \circ \psi\right)^k(z) \notin Z$.
	
	It only remains to prove that \eqref{aux: composition of l and psi} holds for all $k \geq 0$. For this purpose fix $z \in X$ and note that \eqref{aux: composition of l and psi} holds for $k = 0$. We now proceed by induction, assuming that \eqref{aux: composition of l and psi} holds for some $k$ and showing that it then holds for $k + 1$ as well. First note that
	\begin{equation*}
		\left(L \circ \psi\right)^{k + 1}(z) = \gamma L\left(\psi_1\left(\left(L \circ \psi\right)^k(z)\right)\right) + (1 - \gamma) L\left(\psi_2\left(\left(L \circ \psi\right)^k(z)\right)\right). 
	\end{equation*}
	
	We may compute the first term on the right-hand side by noting that
	\begin{equation*}
		L\circ\psi_1(x) = \sum_{a \in A} x D_\pi(a)P(a) = x P_\pi \quad \text{for all} \quad x \in X.
	\end{equation*}
	Since \eqref{aux: composition of l and psi} holds for $k$, the previous equation implies that
	\begin{align*}
		\gamma L\left(\psi_1\left(\left(L \circ \psi\right)^k(z)\right)\right) &= \gamma L\left(\psi_1\left(\gamma^k z P_\pi^k + (1 - \gamma) w_k(z)\right)\right) \\
		&= \gamma^{k + 1} z P_\pi^{k + 1} + \gamma (1 - \gamma)w_k(z) P_\pi.
	\end{align*}
	It follows that \eqref{aux: composition of l and psi} holds for $k + 1$ with
	\begin{equation*}
		w_{k + 1}(z) \defeq \gamma (1 - \gamma)w_k(z) P_\pi + (1 - \gamma) L\left(\psi_2\left(\left(L \circ \psi\right)^k(z)\right)\right).
	\end{equation*}
	The first term on the right-hand side is in $\R_+^S$ since $w_k(z) \in \R_+^S$ and $P_\pi$ has nonnegative entries. Also, the second term is in $\R_+^S$ as well since $L \circ \psi$ and $L \circ \psi_2$ map $X$ into $X$.
\end{proof}

Suppose that the fluid control $\phi$ is defined as in \eqref{eq: fluid control structure} in terms of some fluid control $\psi$ that satisfies Condition \ref{cond: fluid control partially based on pi}. The functions $\psi_1$ and $\psi_2$ defining $\psi$ play different roles in ensuring that the assumptions of Corollary \ref{cor: asymptotic optimality} hold. Informally speaking, $\psi_1$ is responsible for preventing $\beta\left(\bx(t)\right)$ from getting stuck at a value strictly smaller than one when $\bx$ is a fluid trajectory, which is necessary for \eqref{eq: global attractivity}. On the other hand, $\psi_2$ ensures that $\psi$ complies with the constraints of the problem and thus is indeed a fluid control.

The function $\psi_1$ is completely determined by the constants $\pi$, which correspond to a stationary policy for a single process without constraints. Such a policy can be found by inspecting the transitions diagrams of a single process for the different actions. Moreover, as noted in Remark \ref{rem: existence of single-process policy}, there exists $\pi$ that satisfies Condition \ref{cond: fluid control partially based on pi} if and only if the policy $\nu$ that selects actions uniformly at random satisfies the condition. In contrast, the choice of $\psi_2$ depends on the constraints of the problem. In the following two sections we provide rules for defining $\psi_2$ when the constraints have specific structures. Nonetheless, we remark that asymptotically optimal policies can be obtained for problems with different constraint structures if a expression for $\psi_2$, such that $\psi$ satisfies the constraints, is available.

\subsection{Inequality constraints}
\label{sub: inequality constraints}

In this section we impose the following assumption on the constraints.

\begin{assumption}
	\label{ass: inequality constraints}
	The equality constraints are trivial, i.e., we have $C(a) = 0$ for all $a \in A$ and $d = 0$. In addition, the matrices $E(a)$ are nonnegative, the vector $f$ has strictly positive entries and there exists $0 \in A$ such that $E(0) = 0$.
\end{assumption}

As noted in Section \ref{sec: problem formulation}, constraints with nonnegative coefficients as in Assumption \ref{ass: inequality constraints} can be interpreted as resource allocation constraints. Besides nonnegative coefficients, the assumption imposes two further conditions. The first condition is that there are no equality constraints, which means that it is not compulsory to fully consume any of the resources. The second condition is that there exists an action $0 \in A$ that does not consume any of the resources, and can often be interpreted as doing nothing. In this setup, a fluid control that satisfies the assumptions of Corollary \ref{cor: asymptotic optimality} exists if the unconstrained single-process problem admits a unichain and aperiodic policy such that the unique irreducible class is contained in $S_+^*$. The following proposition proves this; the proof is provided in Appendix \ref{app: proofs of various results}.

\begin{proposition}
	\label{prop: problems with inequality constraints}
	Suppose that Assumption \ref{ass: inequality constraints} holds and there exists a set of constants $\pi$ as in Definition \ref{def: fluid control based on single-process policy} such that the associated matrix $P_\pi$ satisfies Condition \ref{cond: fluid control partially based on pi}. Define
	\begin{equation*}
		\gamma \defeq \min\set{1, \frac{f(k)}{E(i, k, a)}}{E(i, k, a) \neq 0} \in (0, 1]
	\end{equation*}
	and $\map{\psi_2}{X}{Y}$ such that $\psi_2(x, 0) \defeq x$ and $\psi_2(x, a) \defeq 0$ for $a \neq 0$. Then Condition \ref{cond: fluid control partially based on pi} holds for the mapping $\psi$ defined by
	\begin{equation*}
		\psi(x, a) \defeq \gamma x D_\pi(a) + (1 - \gamma)\psi_2(x, a) \quad \text{for all} \quad x \in X \quad \text{and} \quad a \in A.
	\end{equation*}
\end{proposition}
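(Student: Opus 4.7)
The plan is to verify Condition \ref{cond: fluid control partially based on pi} for the proposed $\psi$. The unichain and aperiodic properties of $P_\pi$, and the inclusion of $S_+^*$ in the unique irreducible class, are given by hypothesis. Continuity of $\psi$ is immediate since $\psi_1(x,a) = xD_\pi(a)$ is linear in $x$ and $\psi_2$ is either the identity or the zero map. Partial basing on $\pi$ in the sense of Definition \ref{def: fluid control based on single-process policy} holds by construction when $\gamma \in (0,1)$; in the degenerate case $\gamma = 1$, which occurs when all inequalities are non-binding, the construction reduces to $\psi = \psi_1$, and this purely-based control can be trivially rewritten as $\tfrac{1}{2}\psi_1 + \tfrac{1}{2}\psi_1$ to fit the partially-based form.

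The substantive work is verifying that $\psi$ is a fluid control per Definition \ref{def: fluid control}. The state-action consistency $\sum_a \psi(x,a) = x$ follows because $\sum_a D_\pi(a)$ is the identity matrix (each row of $\pi$ sums to one), so $\sum_a \psi_1(x,a) = x$, and $\sum_a \psi_2(x,a) = x$ since only the $a = 0$ term is non-zero. The equality constraint is vacuous under Assumption \ref{ass: inequality constraints} since $C(a) = 0$ and $d = 0$.

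The inequality constraint is the only nontrivial piece and the one place where the specific value of $\gamma$ enters. Because $\psi_2(x,a) E(a) = 0$ for every $a$ (either $\psi_2(x,a) = 0$, or $a = 0$ and $E(0) = 0$ by Assumption \ref{ass: inequality constraints}), the $\psi_2$ component cancels, giving
\[
\sum_{a \in A} \psi(x,a) E(a) = \gamma \sum_{i \in S} x(i) \sum_{a \in A} \pi(a \mid i)\, E(i, \cdot, a),
\]
where $E(i, \cdot, a)$ denotes the $i$-th row of $E(a)$. For each resource index $k$, the defining property of $\gamma$ ensures $\gamma E(i,k,a) \leq f(k)$ for every pair $(i,a)$ (trivially when $E(i,k,a) = 0$, by the $\min$ otherwise), so component $k$ of the right-hand side is at most $f(k) \sum_i x(i) \sum_a \pi(a \mid i) = f(k)$ because $x \in X$ and $\pi(\cdot \mid i)$ is a probability distribution on $A$. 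Strict positivity of $f$ guarantees $\gamma > 0$.

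The only conceptual subtlety is recognizing why $\psi_2$ is concentrated on action $0$: this choice makes $\psi_2$ invisible to the inequality constraint since action $0$ consumes no resources, which allows the scalar factor $\gamma$ to absorb all of the constraint-enforcement effort on the $\psi_1$ side. Once this is observed, the proof is a routine computation.
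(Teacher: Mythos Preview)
Your proof is correct and follows essentially the same approach as the paper's: both reduce the task to verifying that $\psi$ is a fluid control, check the marginal consistency $\sum_a \psi(x,a)=x$, and then bound the inequality-constraint term using $\gamma E(i,k,a)\le f(k)$ together with $\sum_i x(i)\sum_a \condr{\pi}{a|i}=1$. Your treatment is in fact slightly more careful than the paper's, since you explicitly handle the edge case $\gamma=1$ (where Definition~\ref{def: fluid control based on single-process policy} literally requires $\gamma\in(0,1)$) via the rewriting $\psi_1=\tfrac{1}{2}\psi_1+\tfrac{1}{2}\psi_1$, a point the paper glosses over.
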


The proposition gives an expression for a fluid control that satisfies Condition \ref{cond: fluid control partially based on pi}. If the constraints \eqref{eq: constraints} do not depend on $n$, then the following remark provides a simple rounding procedure for defining asymptotically optimal discrete controls.

\begin{remark}
	\label{rem: rounding for inequality constraints}
	Suppose that the assumptions of Proposition \ref{prop: problems with inequality constraints} hold and
	\begin{equation*}
		E_n(a) = E(a) \quad \text{for all} \quad a \in A \quad \text{and} \quad f_n = f \quad \text{for all} \quad n \geq 1.
	\end{equation*}
	Let $\phi$ and $\psi$ be as in \eqref{eq: fluid control structure} and Proposition \ref{prop: problems with inequality constraints}, respectively, and define $\map{\phi_n}{X_n}{Y_n}$ by
	\begin{equation*}
		\phi_n(x) (i, a) \defeq \frac{\floor{n\phi(x) (i, a)}}{n} \quad \text{for all} \quad a \neq 0 \quad \text{and} \quad \phi_n(x) (i, 0) = x(i) - \sum_{a \neq 0} \phi_n(x) (i, a)
	\end{equation*}
	for all $x \in X_n$ and $i \in S$. In addition, note that
	\begin{equation*}
		\max_{x \in X_n} \norm{\phi(x) - \phi_n(x)}_\infty \leq \frac{|A|}{n}. 
	\end{equation*}
	Note that $\phi_n(x)$ puts less weight on the actions $a \neq 0$ than $\phi(x)$, and thus $\phi_n(x)$ satisfies \eqref{eq: constraints} for all $x \in X_n$. Hence, the functions $\phi_n$ are discrete controls for $\phi$. It follows from Corollary \ref{cor: asymptotic optimality} and Proposition \ref{prop: problems with inequality constraints} that these discrete controls are asymptotically optimal.
\end{remark} 

\subsection{Multi-armed restless bandits}
\label{sub: restless bandits}

Next we consider multi-armed restless bandit problems with an equality constraint, as stated in the following assumption. Note that multi-armed restless bandit problems with an inequality constraint are already covered by Assumption \ref{ass: inequality constraints}.

\begin{assumption}
	\label{ass: multi-armed restless bandits with equality constraint}
	The action space is $A = \{0, 1\}$ and there is a unique equality constraint:
	\begin{equation*}
		d \in (0, 1), \quad C(i, 0) = 0 \quad \text{and} \quad C(i, 1) = 1 \quad \text{for all} \quad i \in S;
	\end{equation*}
	In addition, the inequality constraints are trivial, i.e., $E(a) = 0$ for all $a \in A$ and $f = 0$.
\end{assumption}

In this case there is a unique resource which must be fully consumed and there are only two actions. As in Section \ref{sub: inequality constraints}, there exists an action $0 \in A$ that does not consume the resource and can often be interpreted as doing nothing. In contrast, the other action consumes one unit of resource per arm, regardless of the state in which the arm is. The following proposition provides a fluid control that satisfies the assumptions of Corollary \ref{cor: asymptotic optimality} if the unconstrained single-arm problem has a unichain and aperiodic policy such that the unique irreducible class is contained in $S_+^*$; the proof is provided in Appendix \ref{app: proofs of various results}.

\begin{proposition}
	\label{prop: multi-armed restless bandits}
	Suppose that Assumption \ref{ass: multi-armed restless bandits with equality constraint} holds and there exists a set of constants $\pi$ as in Definition \ref{def: fluid control based on single-process policy} such that the matrix $P_\pi$ satisfies Condition \ref{cond: fluid control partially based on pi}. Define $\map{\psi_2}{X}{Y}$ by
	\begin{equation*}
		\psi_2(x)(i, 1) \defeq \frac{d - \sum_{j \in S} x(j) d \condr{\pi}*{1 | j}}{(1 - d)\sum_{j \in S} x(j)\left[1 - d \condr{\pi}*{1 | j}\right]}x(i)\left[1 - d \condr{\pi}*{1 | i}\right]
	\end{equation*}
	and $\psi_2(x)(i, 0) \defeq x(i) - \psi_2(x)(i, 1)$ for all $i \in S$. Condition \ref{cond: fluid control partially based on pi} holds with
	\begin{equation*}
		\psi(x, a) \defeq d x D_\pi(a) + (1 - d) \psi_2(x, a) \quad \text{for all} \quad x \in X \quad \text{and} \quad a \in A.
	\end{equation*}
\end{proposition}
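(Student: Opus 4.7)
The plan is to verify each of the three requirements of Condition~\ref{cond: fluid control partially based on pi}: that $\psi$ is continuous, that $\psi$ is a fluid control partially based on $\pi$ via the decomposition $\psi = d\psi_1 + (1-d)\psi_2$ with $\psi_2 \colon X \to Y$, and that $P_\pi$ is unichain and aperiodic with its unique irreducible class containing $S_+^*$. The last requirement is given by hypothesis, so only the first two need work.

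I would first check that $\psi_2$ takes values in $Y$. Introduce the shorthand $\alpha(x) \defeq \sum_j x(j) \condr{\pi}*{1 | j} \in [0, 1]$ so that the denominator in the formula for $\psi_2$ equals $(1-d)(1 - d\alpha(x))$, which is bounded below by $(1-d)^2 > 0$. Every factor defining $\psi_2(x)(i,1)$ is nonnegative, hence $\psi_2(x)(i,1) \geq 0$, and by construction $\sum_a \psi_2(x)(i,a) = x(i)$. The remaining requirement that $\psi_2(x) \in Y$ is the inequality $\psi_2(x)(i, 1) \leq x(i)$, which after some algebra simplifies to
\begin{equation*}
	(1-d)^2 \geq d^2\left[1 - \alpha(x)\right]\left[1 - \condr{\pi}*{1 | i}\right];
\end{equation*}
verifying this inequality is the technical heart of the argument.

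Next, I would verify the three requirements of Definition~\ref{def: fluid control} for $\psi$. Linearity yields $\sum_a \psi(x, a) = d \sum_a x D_\pi(a) + (1-d) \sum_a \psi_2(x, a) = dx + (1-d)x = x$, since each row of $\sum_a D_\pi(a)$ sums to one. The inequality constraint is trivial because $E(a) = 0$ and $f = 0$. For the equality constraint, using $C(i, 0) = 0$, $C(i, 1) = 1$ and $\sum_i x(i)\left[1 - d\condr{\pi}*{1 | i}\right] = 1 - d\alpha(x)$, I would directly compute
\begin{equation*}
	\sum_i \psi(x)(i, 1) = d\alpha(x) + (1-d) \cdot \frac{d(1 - \alpha(x))}{(1-d)(1-d\alpha(x))} \cdot \left[1 - d\alpha(x)\right] = d\alpha(x) + d(1-\alpha(x)) = d.
\end{equation*}
Continuity of $\psi$ is immediate since $\psi_1$ is linear and $\psi_2$ is a rational function of $x$ with denominator bounded away from zero on $X$. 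With $\gamma = d \in (0,1)$, the decomposition exhibits $\psi$ as partially based on $\pi$, completing the verification.

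The step I expect to be the main obstacle is the nonnegativity of $\psi_2(x)(i, 0)$, equivalently the displayed inequality $(1-d)^2 \geq d^2\left[1 - \alpha(x)\right]\left[1 - \condr{\pi}*{1 | i}\right]$. It is immediate when $d \leq 1/2$ because the right-hand side is bounded by $d^2 \leq (1-d)^2$, but for larger $d$ one must argue that $\alpha(x)$ is bounded below on $X$ by a quantity depending on $d$ — presumably by exploiting structural compatibility between $\pi$ and the activation fraction $d$, such as a lower bound on $\min_i \condr{\pi}*{1 | i}$. Pinning down this bound rigorously is the most delicate part of the proof.
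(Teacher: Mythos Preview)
Your plan coincides with the paper's proof on every step the paper actually carries out: bound the denominator away from zero (the paper uses $(1-d)\min_i[1-d\,\condr{\pi}{1|i}]$, you use the cruder $(1-d)^2$), deduce continuity, check $\sum_a \psi(x,a)=x$, and verify the equality constraint $\sum_i \psi(x)(i,1)=d$ by the same direct computation you wrote out.

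The step you flag as ``the main obstacle'' --- nonnegativity of $\psi_2(x)(i,0)$, equivalently $(1-d)^2 \ge d^2[1-\alpha(x)][1-\condr{\pi}{1|i}]$ --- is \emph{not addressed in the paper's proof at all}. And your suspicion that it can fail for $d>1/2$ is correct: if $\condr{\pi}{1|j}\equiv 0$ then $\alpha(x)\equiv 0$ and the inequality becomes $(1-d)^2\ge d^2$. There is no hidden lower bound on $\alpha(x)$ implied by the hypotheses, so the displayed $\psi_2$ need not land in $Y$, and the literal requirement of Definition~\ref{def: fluid control based on single-process policy} is not met with $\gamma=d$ in general. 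You have found a genuine gap in the paper's presentation, not a missing step in your own argument.

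The downstream conclusion survives, however. The combination $\psi$ itself always maps $X$ into $Y$: the same algebra you set up shows $\psi(x)(i,1)\le x(i)$ reduces to $d\,\condr{\pi}{1|i}\le 1$, which is automatic. Hence $\psi$ is a bona fide continuous fluid control satisfying the budget constraint, and one can re-decompose it as $\gamma'\psi_1+(1-\gamma')\tilde\psi_2$ with any $\gamma'\le (1-d)/\bigl(1-\min_j\condr{\pi}{1|j}\bigr)$; a short computation then gives $\tilde\psi_2(x)\in Y$, so Condition~\ref{cond: fluid control partially based on pi} holds with this smaller~$\gamma'$. So rather than hunting for a structural lower bound on $\alpha(x)$ to rescue the stated $\psi_2$, the clean fix is to adjust $\gamma$.
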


The following remark provides a simple rounding procedure for defining asymptotically optimal discrete controls using the fluid control defined above.

\begin{remark}
	\label{rem: rounding for multi-armed restless bandits}
	Suppose that the assumptions of Proposition \ref{prop: multi-armed restless bandits} hold and that
	\begin{equation*}
		d_n = \frac{\floor{d n}}{n} \quad \text{for all} \quad n \geq 1,
	\end{equation*}
	which is typically assumed for multi-armed restless bandit problems. Let $\phi$ and $\psi$ be as in \eqref{eq: fluid control structure} and Proposition \ref{prop: multi-armed restless bandits}, respectively, and define $\map{\phi_n}{X_n}{Y_n}$ as follows. First let
	\begin{equation*}
		\theta_n(x, i) \defeq \floor{d n} - \sum_{i \in S} \floor{n\phi(x)(i, 1)} - \sum_{j = 1}^{i - 1} \ind{n\phi(x)(i, 1) \notin \Z} \quad \text{for all} \quad x \in X_n \quad \text{and} \quad i \in S,
	\end{equation*}
	where we have enumerated the states, i.e., $S = \{1, \dots, |S|\}$. Then define
	\begin{equation*}
		\phi_n(x)(i, 1) \defeq \begin{cases}
			\phi(x)(i, 1) & \text{if} \quad n\phi(x)(i, 1) \in \Z, \\
			\frac{1}{n}\left[\floor{n\phi(x)(i, 1)} + 1\right] & \text{if} \quad n\phi(x)(i, 1) \notin \Z \quad \text{and} \quad \theta_n(x, i) > 0, \\
			\frac{1}{n}\floor{n\phi(x)(i, 1)} & \text{otherwise}.
		\end{cases}
	\end{equation*}
	and $\phi_n(x)(i, 0) \defeq x(i) - \phi_n(x)(i, 1)$ for all $x \in X_n$ and $i \in S$. This can be interpreted algorithmically as follows. In a first stage, we let $\phi_n(x)(i, 1) = \floor{n\phi(x)(i, 1)} / n$  for all $i \in S$. If $n\phi_n(x)(i, 1) \notin \Z$ for some $i$, then we must select action $1$ for more processes in order to exhaust the budget $\floor{dn}$. Therefore, in a second stage, we consider the states $i$ in increasing order and increase $\phi_n(x)(i, 1)$ for some states until the budget is exhausted. We increase $\phi_n(x)(i, 1)$ by $1 / n$ if the budget has not been exhausted yet and $n\phi(x)(i, 1) \notin \Z$. The above expression for $\phi_n(x)(i, 1)$ is obtained by noting that the remaining budget when state $i$ is considered is equal to $\theta_n(x, i)$. It is easy to check that
	\begin{equation*}
		\max_{x \in X_n} \norm{\phi(x) - \phi_n(x)}_\infty \leq \frac{1}{n}
	\end{equation*}
	and that $\phi_n(x)$ satisfies the unique equality constraint for all $x \in X_n$. Thus, the functions $\phi_n$ are discrete controls for $\phi$. We conclude from Corollary \ref{cor: asymptotic optimality} and Proposition \ref{prop: multi-armed restless bandits} that these discrete controls are asymptotically optimal.
\end{remark}

\section{Examples}
\label{sec: examples}

Sections \ref{sub: inequality constraints} and \ref{sub: restless bandits} describe clear procedures for obtaining asymptotically optimal discrete controls in the cases of resource allocation inequality constraints and multi-armed restless bandits, respectively. In both cases the procedure involves the following steps.
\begin{enumerate}
	\item[(a)] Compute an optimal solution $y^*$ of \eqref{pr: fluid relaxation} and define $x^*$ and $S_+^*$ as in \eqref{eq: definition of x^* and S_+^*}.
	
	\item[(b)] Find some policy $\pi$ for the unconstrained single-process problem that is unichain, aperiodic and such that the unique irreducible class contains $S_+^*$.
	
	\item[(c)] Use $\pi$ to define a fluid control $\psi$ as in Proposition \ref{prop: problems with inequality constraints} or \ref{prop: multi-armed restless bandits}, depending on the type of problem, and define a fluid control $\phi$ as in \eqref{eq: fluid control structure}; this solves the fluid problem.
	
	\item[(d)] Compute a discrete control $\phi_n$ by rounding the fluid control $\phi$ as in Remark \ref{rem: rounding for inequality constraints} or \ref{rem: rounding for multi-armed restless bandits}, depending on the type of problem; this solves the approximation problem.
\end{enumerate}

Step (a) can be carried out numerically. Then Remark \ref{rem: existence of single-process policy} can be used to check whether a policy $\pi$ as in step (b) exists, and in that case it is possible to take $\pi = \nu$ with $\nu$ as in Remark \ref{rem: existence of single-process policy}. Nonetheless, another good candidate policy is given by
\begin{equation}
	\label{eq: candidate pi}
	\condr{\mu}{a | i} = \begin{cases}
		\frac{y^*(i, a)}{x^*(i)} & \text{if} \quad x^*(i) > 0, \\
		\frac{1}{|A|} & \text{if} \quad x^*(i) = 0.
	\end{cases}
\end{equation}
We have observed empirically that this policy can perform better than $\nu$ for finite values of $n$ when it satisfies Condition \ref{cond: fluid control partially based on pi}. Also, it has the nice property that $x^* D_\mu(a) = y^*(a)$ for all $a \in A$. The remaining steps (c) and (d) are straightforward to carry out.

In the following two sections we use the above procedure to obtain discrete controls for concrete problem instances. The policy $\mu$ satisfies Condition \ref{cond: fluid control partially based on pi} and is selected in step (b) in all the cases with the exception of the problem instance considered in Section \ref{subsub: counterexample to id policy}. There $\mu$ does not satisfy Condition \ref{cond: fluid control partially based on pi}, but $\nu$ does and is selected in step (b). First we consider a problem with multiple actions and resource allocation inequality constraints as in Section \ref{sub: inequality constraints}, and such that the single-process problem is multichain. Then we consider multi-armed restless bandit problems, focusing on different problem instances where the policies proposed in \cite{whittle1988restless,verloop2016asymptotically,hong2023restless,hong2024unichain} are not asymptotically optimal. The discrete controls $\phi_n$ satisfy the assumptions of Proposition \ref{prop: problems with inequality constraints} or \ref{prop: multi-armed restless bandits} in all the cases and therefore are provably asymptotically optimal, as is confirmed by our numerical experiments.

\subsection{Multiple actions and constraints}
\label{sub: multiple actions and constraints}

Consider a fleet of $n$ electric taxis and suppose that each process describes the battery level of a taxi. Namely, $S = \{0, \dots, 7\}$ are the battery levels and $\bS_n(t, m)$ represents the battery level at time $t$ of taxi $m$. Also, the actions $A = \{0, 1, 2\}$ are: to deploy the taxi at the airport, to deploy the taxi at the city center and to charge the taxi, respectively.

The battery level of a taxi that is being charged increases at a constant rate, whereas that of a deployed taxi decreases by a random amount. More precisely, the distribution of $\bS_n(t + 1, m)$ given that $\bA_n(t, m) = a$ and $\bS_n(t, m) = i$ is known and given by:
\begin{equation*}
	\begin{tabular}{ll}
		$\bS_n(t + 1, m) = \min \left\{i + 2, 7\right\}$ & $\text{if} \quad a = 2$, \\
		$\bS_n(t + 1, m) \sim \max \left\{i - X_a, 0\right\}$ & $\text{if} \quad a \neq 2$.
	\end{tabular}
\end{equation*}
The random variable $X_a$ represents the power consumed by a taxi during period $t$, and tends to be larger for taxis that are deployed at the airport. Specifically, $X_a$ has a Poisson distribution with a mean that depends on $a$ in the following way:
\begin{equation*}
	\condp*{X_a = k} \defeq \frac{\lambda_a^k \e^{-\lambda_a}}{k!} \quad \text{for all} \quad k \geq 0 \quad \text{with} \quad \lambda_a \defeq \begin{cases}
		2 & \text{if} \quad a = 0, \\
		1 & \text{if} \quad a = 1.
	\end{cases}
\end{equation*}

At most $70\%$ of the taxis can be charged simultaneously since there are limited charging spots, and the taxi company has agreed with the city hall to deploy at least $10\%$ of the fleet at the airport. These constraints can be formulated in such a way that Assumption \ref{ass: inequality constraints} holds, i.e., by requiring that the following inequalities hold for all $t \geq 0$:
\begin{align*}
	&\sum_{i \in S} \by_n(t, i, 2) \leq 0.7, \\ 
	&\sum_{i \in S} \left[\by_n(t, i, 1) + \by_n(t, i, 2)\right] \leq 0.9.
\end{align*}

Finally, the rewards are given by
\begin{equation*}
	\begin{tabular}{ll}
		$r(i, 0) = b_0 \condp*{X_0 < i} - c_0 \condp*{X_0 \geq i}$  	& $\text{if} \quad a = 0$, \\
		$r(i, 1) = b_1 \conde*{X_1\ind{X_1 < i}} - c_1 \condp*{X_1 \geq i}$ 		& $\text{if} \quad a = 1$, \\
		$r(i, 2) = -c_2$														& $\text{if} \quad a = 2$,
	\end{tabular}
\end{equation*}
The constants $b_a$ and $c_a$ can be interpreted as revenues and costs, respectively. On the one hand, taxis deployed at the airport do one trip per period and charge a flat fare $b_0 \defeq 3$, whereas taxis deployed at the city center charge $b_1 \defeq 2.5$ per unit of distance traveled. On the other hand, a cost $c_2 \defeq 2$ is paid for charging a taxi and penalties $c_0 \defeq 3$ and $c_1 \defeq 2$ are incurred if a taxi is deployed and completely depletes its battery. The rewards defined in this way are plotted on the left panel of Figure \ref{fig: example 6}.

\begin{figure}
	\centering
	\begin{subfigure}{0.49\columnwidth}
		\centering
		\includegraphics{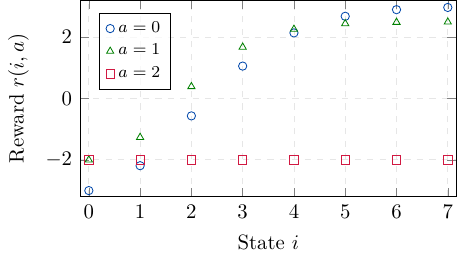}
	\end{subfigure}
	\hfill
	\begin{subfigure}{0.49\columnwidth}
		\centering
		\includegraphics{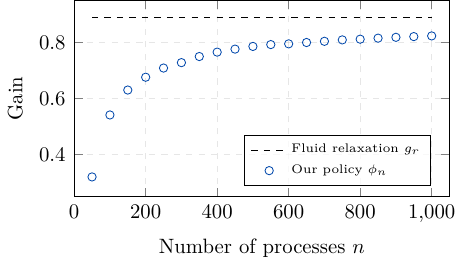}
	\end{subfigure}
	\caption{The left panel shows the reward obtained per process and per time step as a function of the state of the process and the action selected. The right panel charts the optimal value of the fluid relaxation, and the gain of the discrete control $\phi_n$ 
	when all the batteries are initially empty.}
	\label{fig: example 6}
\end{figure}

Because the constraints of the problem do not depend on the number of taxis $n$, it follows from Proposition \ref{prop: asymptotic upper bound for the gain} that the optimal value $g_r = 0.8911$ of the fluid relaxation is an upper bound for the gain of any policy. The optimal solution $y^*$ of the fluid relaxation that we used to construct a discrete control $\phi_n$ is as follows:
\begin{equation*}
	y^* =
	\begin{bmatrix}
		0 & 0 & 0 & 0 & 0 & 0 & 0 & 0.1000 \\
		0 & 0 & 0 & 0 & 0 & 0 & 0.3236 & 0.2095 \\
		0.0009 & 0.0023 & 0.0100 & 0.0343 & 0.1004 & 0.2189 & 0 & 0
	\end{bmatrix}.
\end{equation*}
The constraint that $10\%$ of the fleet is deployed at the airport is satisfied with equality, whereas the other constraint is satisfied with a strict inequality; approximately $37\%$ of the taxis are being charged while $53\%$ of the taxis are in the city center.

Although the unconstrained single-process problem is mutichain, it is easy to check that the policy defined by \eqref{eq: candidate pi} satisfies Condition \ref{cond: fluid control partially based on pi}. It follows that asymptotically optimal discrete controls $\phi_n$ can be obtained as explained in Remark \ref{rem: rounding for inequality constraints}. The right panel of Figure \ref{fig: example 6} compares the gains of these discrete controls against the upper bound $g_r$ for different values of $n$; the gains were computed by simulating the processes and averaging the rewards. The plot confirms that the gain of our discrete controls approaches the upper bound. 

\subsection{Multi-armed restless bandits}
\label{sub: restless bandits examples}

We are not aware of any policies for maximizing the expected average reward of weakly coupled processes with more than one constraint. Hence, we compare our assumptions on the transition kernels of the processes with those in the literature for multi-armed restless bandits, which are a narrower class of problems with only two actions and a single constraint with a specific structure. In particular, we consider multi-armed restless bandit problems where the budget constraint is an equality constraint, and we compare our discrete controls with the following four policies: the Whittle index policy proposed in \cite{whittle1988restless}, the LP-priority policies introduced in \cite{verloop2016asymptotically}, the Follow-The-Virtual-Advice (FTVA) policy defined in \cite{hong2023restless} and the ID policy proposed in \cite{hong2024unichain}. The asymptotic optimality of these policies has been proved in \cite{weber1990index,verloop2016asymptotically,gast2023linear,hong2023restless,gast2023exponential,hong2024unichain} under the assumptions that we discuss below.
\begin{enumerate}
	\item[(a)] For the Whittle index and LP-priority policies, the Markov decision process formed by $n$ arms must be unichain for all $n$ as required in \cite{weber1990index,verloop2016asymptotically,gast2023exponential,gast2023linear}; in fact, some of these papers even assume that each arm is ergodic.
	
	\item[(b)] For the Whittle index policy, the unconstrained single arm must satisfy a condition known as indexability, without which the policy is not well-defined; see \cite{whittle1988restless}.
	
	\item[(c)] For the Whittle index and LP-priority policies, a discrete-time dynamical system must satisfy a global attractor property similar to \eqref{eq: global attractivity}; see \cite{weber1990index,verloop2016asymptotically,gast2023exponential,gast2023linear}. We do not assume that \eqref{eq: global attractivity} holds but prove \eqref{eq: global attractivity} under the assumption of Proposition \ref{prop: multi-armed restless bandits}. 
	
	\item[(d)] For the FTVA policy, the synchronization assumption defined in \cite{hong2023restless} must hold.
	
	\item[(e)] For the ID policy, the single-arm policy \eqref{eq: candidate pi} must be unichain and aperiodic.
\end{enumerate}

The synchronization assumption pertains to a single-arm problem with a relaxed budget constraint, which must hold on average and not at each time step. Loosely speaking, a system formed by a leader and a follower arm is considered, where the follower arm always selects the same action as the leader arm. The assumption is that there exists an optimal policy, for the aforementioned relaxed single-arm problem, that is unichain and such that the mean time until the leader and follower arms are in the same state is finite. We note that an optimal policy for the above-described relaxed single-arm problem can be obtained as in \eqref{eq: candidate pi} by standard results for constrained Markov decision processes; see \cite{altman2021constrained}.

Conditions (a)-(c) are significantly stronger than the assumption of Proposition \ref{prop: multi-armed restless bandits}, which only requires that the unconstrained single-arm problem admits a unichain and aperiodic policy such that the unique irreducible class contains $S_+^*$. In addition, condition (e) implies that Proposition \ref{prop: multi-armed restless bandits} holds since $x^*$ is the stationary distribution of the policy defined by \eqref{eq: candidate pi} when this policy is unichain and aperiodic. On the other hand, condition (d) is not entirely understood. However, the synchronization assumption seems to be connected with the existence of suitable aperiodic policies for a single arm; see \cite[Appendix C]{hong2023restless}.

Each of the conditions (a)-(e) is violated in one of the examples presented below, and each of the policies for multi-armed bandit problems mentioned above is not asymptotically optimal in one of the examples. In contrast, the assumption of Proposition \ref{prop: multi-armed restless bandits} holds in all the examples, and thus our discrete controls are provably asymptotically optimal.

\subsubsection{Counterexamples to Whittle index and LP-priority policies}
\label{subsub: counterexamples for whittle index and lp-priority policies}

In this section we consider two examples where the number of states is three and the single-arm problem is ergodic. Further, the transition probability matrices have positive entries; these matrices, the rewards and the constraints are provided in Appendix \ref{app: details about the examples}.

The first example was first considered in \cite{gast2023testing} and is nonindexable. In particular, the Whittle index policy is not well-defined. The left panel of Figure \ref{fig: examples 2 and 3} compares the gain of different policies with the optimal value $g_r = 0.3437$ of the fluid relaxation, which is an upper bound for the gain of any policy. For this particular problem instance, the LP-priority policy outperforms our policy $\phi_n$ and the FTVA and ID policies; the latter three policies perform similarly. However, all four policies perform extremely well, with optimality gaps that are less than $3\%$ for $n = 200$ and $1\%$ for $n = 2000$.

\begin{figure}
	\centering
	\begin{subfigure}{0.49\columnwidth}
		\centering
		\includegraphics{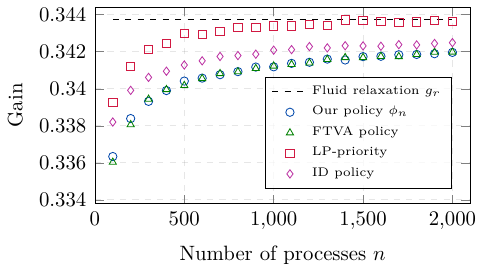}
	\end{subfigure}
	\hfill
	\begin{subfigure}{0.49\columnwidth}
		\centering
		\includegraphics{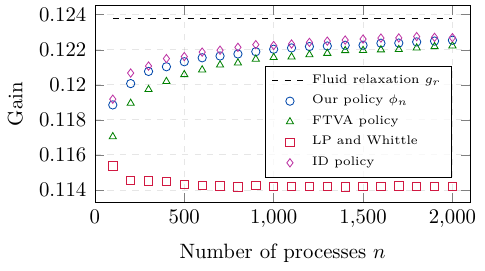}
	\end{subfigure}
	\caption{The left panel and right panels correspond to the nonindexable problem and the problem where the global attractivity property does not hold for the Whittle index and LP-priority policies, respectively.}
	\label{fig: examples 2 and 3}
\end{figure}

The second example was first considered in \cite{gast2020exponential} and is such that the global attractivity assumption does not hold for the Whittle index and LP-priority policies, which in this case are the same policy. The plot in the right panel of Figure \ref{fig: examples 2 and 3} shows that these policies are not asymptotically optimal for this problem instance. In contrast, our discrete control and the FTVA and ID policies approach $g_r = 0.1238$ at similar rates as $n$ grows.

\subsubsection{Counterexamples to Follow-The-Virtual-Advice}
\label{subsub: counterexample for ftva}

We now consider an example where the synchronization assumption does not hold and the single-arm problem is multichain. Specifically, the budget constraint is $d = 0.25$, the transition probabilities are depicted in Figure \ref{fig: example 5} and the rewards are given by
\begin{equation*}
	r =
	\begin{bmatrix}
		0 & 0 & 1 & 0 & 1 \\
		0 & 0 & 0 & 1 & 0
	\end{bmatrix}.
\end{equation*}

In order to show that the synchronization assumption does not hold, let us consider the single-arm problem with the relaxed budget constraint, i.e., action $1$ must be selected $25\%$ of the time. Consider also the following policy for this problem:
\begin{equation*}
	\condr{\pi}*{0 | i} = 1 \quad \text{if} \quad i \in \{2, 4\} \quad \text{and} \quad \condr{\pi}*{1 | i} = 1 \quad \text{if} \quad i \in \{0, 1, 3\}.
\end{equation*}
This policy is unichain and aperiodic, and the probability that action $1$ is selected in steady state is $0.25$. Hence, $\pi$ is a feasible solution, and it is easy to check that its gain is $1$, which is clearly the maximum gain that any policy can achieve. It is also possible to check that any other policy with gain $1$ must coincide with $\pi$ in states $2$, $3$ and $4$. Moreover, the set $\{2, 3, 4\}$ must be absorbing for the Markov chain associated with the policy.

Suppose now that a leader arm uses one of the optimal policies while a follower arm always selects the same action as the leader arm. Assume also that the leader and follower arms are in states $2$ and $0$ at time zero, respectively. Since the leader arm remains within $\{2, 3, 4\}$, it can never select action $1$ two times in a row and therefore the follower arm can never leave $\{0, 1\}$. It follows that the synchronization assumption does not hold, and the plot in Figure \ref{fig: example 5} shows that FTVA is not asymptotically optimal if all the arms are initially in state $0$. Similar counterexamples to FTVA are constructed in \cite{hong2024unichain}; these examples satisfy the assumptions in \cite{hong2024unichain} and thus also our weaker assumptions.

\begin{figure}
	\centering
	\begin{subfigure}{0.49\columnwidth}
		\centering
		\includegraphics{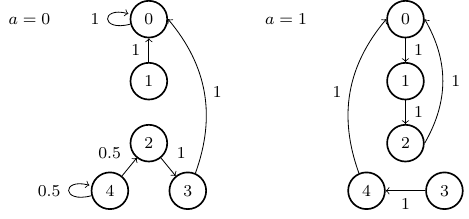}
	\end{subfigure}
	\hfill
	\begin{subfigure}{0.49\columnwidth}
		\centering
		\includegraphics{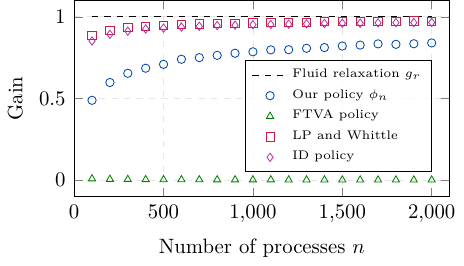}
	\end{subfigure}
	\caption{The diagrams depict the transition probabilities for both actions. The plot compares the optimal value of the fluid relaxation and the gain of several policies. The gains were computed by simulating systems where all the arms are initially in state $0$.}
	\label{fig: example 5}
\end{figure}

States $2$ and $3$ have the lowest and highest Whittle indexes, respectively, and our numeric computations indicate that the other three states have a common Whittle index that is strictly between the indexes of $2$ and $3$. If we arrange the states as $(3, 1, 0, 4, 2)$, then we obtain the Whittle index policy plotted in Figure \ref{fig: example 5}, which is also an LP-priority policy; arranging the states as $(3, 1, 0, 2, 4)$ we obtain another LP-priority policy with nearly the same performance. However, there are no results that rigorously establish this asymptotic optimality because the example considered here is multichain and all the available results concern unichain setups. In contrast, our discrete controls $\phi_n$ and the ID policy are provably asymptotically optimal for this problem instance, as confirmed by Figure \ref{fig: example 5}.

\subsubsection{Counterexample to ID policy}
\label{subsub: counterexample to id policy}

Finally, we consider multi-armed restless bandits with budget constraint $d = 0.5$ and the transition probabilities and rewards depicted in Figure \ref{fig: example 8}. The objective of the fluid relaxation \eqref{pr: fluid relaxation} is to maximize $y(0, 0) + y(1, 1) \leq 1$, and $\max\{y(0, 0), y(1, 1)\} \leq 0.5$ by the budget constraint. Then it is not difficult to check that there exists a unique solution $y^*$ which is given by $y^*(0, 0) = y^*(1, 1) = 0.5$ and $y^*(i, a) = 0$ otherwise.

\begin{figure}
	\centering
	\begin{subfigure}{0.49\columnwidth}
		\centering
		\includegraphics{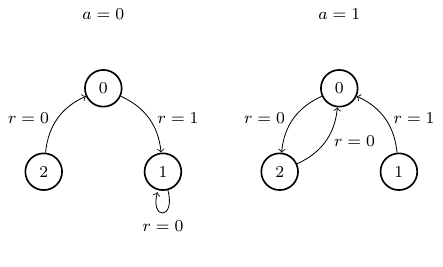}
	\end{subfigure}
	\hfill
	\begin{subfigure}{0.49\columnwidth}
		\centering
		\includegraphics{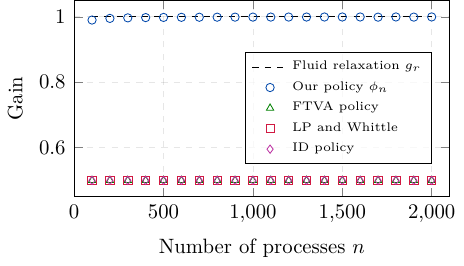}
	\end{subfigure}
	\caption{The diagrams depict the transition probabilities and rewards for both actions; all the transitions are deterministic. The chart compares the optimal value of the fluid relaxation and the gain of different policies; the gains were computed by simulating systems where all the arms are initially in state $0$.}
	\label{fig: example 8}
\end{figure}

Consider now the single-arm policy $\mu$ defined by \eqref{eq: candidate pi} when $y^*$ is as above. State $2$ is transient while states $0$ and $1$ form an irreducible class with period $2$. The synchronization assumption used in \cite{hong2023restless} does not hold because a leader starting in state $0$ and a follower starting in state $2$ will never be in the same state. In addition, the assumptions used in \cite{hong2024unichain} to establish the asymptotic optimality of the ID policy do not hold either since $\mu$ is not aperiodic. However, the policy $\nu$ that selects actions uniformly at random is ergodic and thus yields discrete controls $\phi_n$ that are asymptotically optimal by Proposition \ref{prop: multi-armed restless bandits}.

The plot in Figure \ref{fig: example 8} confirms that our discrete controls $\phi_n$ are asymptotically optimal and further shows that all the other policies have gains that are far from the optimal value when all the arms are in state $0$ at time zero. In fact, it is not difficult to check analytically that the Whittle index, LP-priority and ID policies have gains equal to $1 / 2$.

For example, consider the ID policy, which follows the policy $\mu$ defined by \eqref{eq: candidate pi} for as many arms as possible. More specifically, a permanent and unique ID in $\{1, \dots, n\}$ is assigned to each arm at time $t = 0$. Then at each time, an action is sampled for each arm using the single-arm policy $\mu$, and the sampled action is applied to the arms in order of their IDs for as many arms as allowed by the budget constraint; the actions for the rest of the arms are chosen so as to comply with the constraint. For the problem described in Figure \ref{fig: example 8}, the action sampled at time $t = 0$ is $0$ for all the arms. Hence, the half of the arms having the lowest IDs will move to state $1$ at $t = 1$, while the rest of the arms will move to state $2$. Then the action sampled for the arms in state $1$ will be $1$ because $\mu$ is deterministic at state $1$. Since these arms have the lowest IDs, action $1$ will be used for these arms and action $0$ will be used for the other arms. It follows that the state of the entire system at $t = 2$ is exactly the same as at $t = 0$ and that the system evolves periodically in the way just described. We conclude that half of the arms will obtain a reward equal to $1$ at each time step while the other half will not obtain any reward.

Noting that the Whittle index and LP-priority policy give the highest priority for using action $1$ to state $1$, similar arguments as above can be used to show that both policies have gain $1 / 2$. The gain of the FTVA policy may depend on the tie-breaking rule used to decide which arms follow the virtual advice; the tie-breaking rule used in our simulations is the same that we described above for the ID policy. Nonetheless, we recall that the synchronization assumption does not hold for this problem instance.

\section{Concluding remarks}
\label{sec: conclusion}

We have established that asymptotically optimal policies for weakly coupled Markov decision processes can be obtained essentially by solving a fluid counterpart of the problem and then constructing discrete approximations of the solution. We have further derived sufficient conditions for the existence of a fluid control that solves the fluid problem and structural properties that help to construct such a fluid control. Moreover, we have used these conditions and structural properties to construct asymptotically optimal policies for multi-armed restless bandits and a broad class of problems with multiple actions and inequality constraints. In these setups, we showed that a sufficient condition for asymptotic optimality is that the unconstrained single-process problem admits a suitable unichain and aperiodic policy; and we provided multichain examples where this condition holds.

While most of our results hold for problems with general constraints, the asymptotically optimal policies are constructed explicitly only for multi-armed restless bandits and the above-mentioned class of problems with multiple actions and inequality constraints. An interesting topic for future research is the explicit construction of asymptotically optimal policies for problems with a different structure. In addition, the present paper focuses on obtaining asymptotically optimal policies under very general assumptions, but it would be interesting to study how the optimality gap of our policies depends on the number of processes, possibly under more restrictive assumptions. Our numerical experiments suggest that this dependence may vary across different problem instances.


\begin{appendices}
	
\section{Additional proofs}
\label{app: proofs of various results}

\begin{proof}[Proof of Proposition \ref{prop: asymptotic upper bound for the gain}]
	Let $\pi \in \Pi_n^{SR}$ and $s \in S^n$. By \cite[Proposition 8.1.1]{puterman2014markov},
	\begin{equation}
		\label{aux: state-action frequencies for a policy and initial state}
		y_n(i, a) \defeq \lim_{T \to \infty} \frac{1}{T}\sum_{t = 0}^{T - 1} \conds{E_s^\pi}*{\by_n(t, i, a)}
	\end{equation}
	exists for all $i \in S$ and $a \in A$; the limit clearly depends on the specific policy $\pi$ and initial state $s$, but we omit them from the notation for brevity. We claim that
	\begin{subequations}
		\label{aux: feasibility conditions}
		\begin{align}
			&\sum_{a \in A} y_n(a)P(a) = \sum_{a \in A} y_n(a), \label{saux: balance equation}\\
			&\sum_{a \in A} y_n(a)C_n(a) = d_n, \\
			&\sum_{a \in A} y_n(a)E_n(a) \preceq f_n, \\
			&y_n \in Y.
		\end{align}
	\end{subequations}
	
	Suppose that \eqref{aux: feasibility conditions} holds for all the stationary policies $\pi$ and initial states $s$. If \eqref{eq: constant constraints} also holds, then $y_n$ is a feasible solution of the fluid relaxation, and thus
	\begin{equation*}
		g_n^\pi(s) = \sum_{a \in A} y_n(a) r(a) \leq g_r.
	\end{equation*}
	Since the stationary policy $\pi$ and initial state $s$ are arbitrary, this proves the first claim of the proposition. For the second claim, we argue by contradiction. If \eqref{eq: asymptotic upper bound} does not hold, then there exist stationary policies and initial states $\set{\pi_k, s_k}{k \in \calK}$ such that
	\begin{equation*}
		\lim_{k \to \infty} g_k^{\pi_k}(s_k) > g_r,
	\end{equation*}
	where $\calK \subset \N$ is infinite. Define $y_k$ in terms of $\pi_k$ and $s_k$ as in \eqref{aux: state-action frequencies for a policy and initial state}. It follows from \eqref{aux: feasibility conditions} and the compacity of $Y$ that the sequence $\set{y_k}{k \in \calK}$ has a limit point $y$ that is a feasible solution of \eqref{pr: fluid relaxation}. By taking a subsequence, we may assume without any loss of generality that $y_k \to y$ as $k \to \infty$, which leads to the following contradiction:
	\begin{equation*}
		\lim_{k \to \infty} g_k^{\pi_k}(s_k) = \lim_{k \to \infty} \sum_{a \in A} y_k(a)r(a) = \sum_{a \in A} y(a)r(a) \leq g_r.
	\end{equation*}
	
	It only remains to prove that \eqref{aux: feasibility conditions} holds for all $\pi \in \Pi_n^{SR}$ and $s \in S^n$. We only prove \eqref{saux: balance equation} since the other properties follow from similar arguments. For this purpose note that
	\begin{equation*}
		\sum_{a \in A} \conds{E_s^\pi}*{\by_n(t, a)} P(a) = \sum_{a \in A} \conds{E_s^\pi}*{\by_n(t + 1, a)} \quad \text{for all} \quad t \geq 0
	\end{equation*}
	by \eqref{eq: state and state-action frequencies}. Taking the Ces\`aro limit as $t \to \infty$ on both sides of the equation, we conclude that the vector $y_n$ defined by \eqref{aux: state-action frequencies for a policy and initial state} indeed satisfies \eqref{saux: balance equation}.
\end{proof}

\begin{proof}[Proof of Lemma \ref{lem: variance of z_n}]
	Consider the random variables
	\begin{equation*}
		\condr{U_t^l}*{j | i, a} \defeq \condr{B_t^l}*{j | i , a} - \condr{p}*{j | i, a} \quad \text{and} \quad \condr{V_t^l}*{i, a} \defeq \ind{n\by_n(t - 1, i, a) \geq l}.
	\end{equation*}
	Now let us fix $t \geq 1$ and $j \in S$, and note that we may write
	\begin{equation*}
		\bz_n(t, j) = \frac{1}{n} \sum_{l = 1}^n \sum_{i \in S} \sum_{a \in A} \condr{W_t^l}*{j | i, a} \quad \text{with} \quad \condr{W_t^l}*{j | i, a} \defeq \condr{U_t^l}*{j | i, a}\condr{V_t^l}*{i, a}.
	\end{equation*}
	
	If $l_1 \neq l_2$, $i_1 \neq i_2$ or $a_1 \neq a_2$, then
	\begin{equation*}
		\conde*{\condr{W_t^{l_1}}*{j | i_1, a_1}\condr{W_t^{l_2}}*{j | i_2, a_2}} = \conde*{\condr{U_t^{l_1}}*{j | i_1, a_1}}\conde*{\condr{V_t^{l_1}}*{i_1, a_1}\condr{W_t^{l_2}}*{j | i_2, a_2}} = 0
	\end{equation*}
	because $\condr{U_t^{l_1}}*{j | i_1, a_1}$ is independent of $\by_n(t - 1)$ and $\condr{U_t^{l_2}}*{j | i_2, a_2}$. Furthermore,
	\begin{align*}
		\conde*{\left(\condr{W_t^l}*{j | i, a}\right)^2} &= \conde*{\left(\condr{U_t^l}*{j | i, a}\right)^2} \conde*{\left(\condr{V_t^l}*{i, a}\right)^2} \\
		&= \left[1 - \condr{p}*{j | i, a}\right]\condr{p}*{j | i, a}\condp{n\by_n(t - 1, i, a) \geq l}
	\end{align*}
	for all $l \geq 1$, $i \in S$ and $a \in A$. We conclude that
	\begin{align*}
		\conde*{\bz_n^2(t, j)} &= \frac{1}{n^2}\sum_{l = 1}^n \sum_{i \in S} \sum_{a \in A} \left[1 - \condr{p}*{j | i, a}\right]\condr{p}*{j | i, a}\condp{n\by_n(t - 1, i, a) \geq l} \\
		&\leq \frac{1}{n^2} \sum_{i \in S} \sum_{a \in A} q_{\max}(j)\condr{p}*{j | i, a}\conde*{n\by_n(t - 1, i, a)} = \frac{q_{\max}(j)}{n}\conde*{\bx_n(t, j)}. 
	\end{align*}
	
	The other inequality is obtained using similar arguments.
\end{proof}

\begin{proof}[Proof of Lemma \ref{lem: function beta}]
	Suppose that $x \in X$ and $x \neq x^*$. Then
	\begin{equation*}
		\beta(x) = \beta(x) x^* e \leq x e = 1,
	\end{equation*}
	where $e$ is the column vector of ones. It is clear that $\beta(x) \neq 1$ since otherwise $x = x^*$, so we conclude that $1 - \beta(x) > 0$, and thus (a) follows from
	\begin{equation*}
		\left[1 - \beta(x)\right]^{-1}\left[x - \beta(x)x^*\right] e = 1.
	\end{equation*}
	
	Recall that $S_+^* \defeq \set{i \in S}{x^*(i) > 0}$. For each $\lambda \geq 0$, we have:
	\begin{align*}
		&\beta^{-1}\left((\lambda, \infty)\right) = \set{x \in \R_+^S}{x(i) > \lambda x^*(i)\ \text{for all}\ i \in S^*}, \\
		&\beta^{-1}\left([\lambda, \infty)\right) = \set{x \in \R_+^S}{x(i) \geq \lambda x^*(i)\ \text{for all}\ i \in S^*}.
	\end{align*}
	These sets are open and closed in $\R_+^S$, respectively. Therefore,
	\begin{equation*}
		\beta^{-1}\left((\lambda, \mu)\right) = \beta^{-1}\left((\lambda, \infty)\right) \setminus \beta^{-1}\left([\mu, \infty)\right) \quad \text{and} \quad \beta^{-1}\left([0, \mu)\right) = \R_+^S \setminus \beta^{-1}\left([\mu, \infty)\right)
	\end{equation*}
	are open in $\R_+^S$ for all $\lambda \geq 0$ and $\mu > 0$, which proves (b).
\end{proof}

\begin{proof}[Proof of Proposition \ref{prop: problems with inequality constraints}]
	We only need to check that $\psi$ is a fluid control. First note that
	\begin{equation*}
		\sum_{a \in A} \psi(x, a) = \gamma x \sum_{a \in A} D_\pi(a) + (1 - \gamma) x = x \quad \text{for all} \quad x \in X.
	\end{equation*}
	Furthermore, the inequality constraints hold since
	\begin{equation*}
		\sum_{a \neq 0}\sum_{i \in S} \gamma x(i)\condr{\pi}*{a | i}E(i, k, a) \leq \gamma \max \set{E(i, k, a)}{i \in S, a \neq 0} \leq f(k) \quad \text{for all} \quad x \in X
	\end{equation*}
	and all $k$ smaller than or equal to the dimension of $f$. Hence, Definition \ref{def: fluid control} holds.
\end{proof}

\begin{proof}[Proof of Proposition \ref{prop: multi-armed restless bandits}]
	Since $d < 1$, we have
	\begin{equation*}
		\varepsilon \defeq \min_{i \in S} \left[1 - d \condr{\pi}*{1 | i}\right] > 0.
	\end{equation*}
	Hence, the denominator in the definition of $\psi_2(x)(i, 1)$ is lower bounded by $(1 - d)\varepsilon$. This implies that $\psi_2$ is well-defined, and clearly also continuous.
	
	It only remains to check that $\psi$ is a fluid control. First note that
	\begin{equation*}
		\sum_{a \in A} \psi(x, a) = d x \sum_{a \in A} D_\pi(a) + (1 - d) \sum_{a \in A} \psi_2(x, a) = d x + (1 - d)x = x
	\end{equation*} 
	because $\psi_2(x, 0) + \psi_2(x, 1) = x$ by definition. Moreover,
	\begin{align*}
		\sum_{i \in S} \psi(x)(i, 1) &= \sum_{i \in S} x(i) d \condr{\pi}*{1 | i} + \sum_{i \in S} (1 - d)\psi_2(x)(i, 1) \\
		&= \sum_{i \in S} x(i) d \condr{\pi}*{1 | i} + \sum_{i \in S} \frac{d - \sum_{j \in S} x(j) d \condr{\pi}*{1 | j}}{\sum_{j \in S} x(j)\left[1 - d \condr{\pi}*{1 | j}\right]}x(i)\left[1 - d \condr{\pi}*{1 | i}\right] = d,
	\end{align*}
	which means that $\psi$ complies with the unique equality constraint.
\end{proof}

\section{Details about the examples}
\label{app: details about the examples}

The transition probability matrices for the nonindexable example of Section \ref{subsub: counterexamples for whittle index and lp-priority policies} are:
\begin{equation*}
	P(0) =
	\begin{bmatrix}
		0.0050 & 0.7930 & 0.2020 \\
		0.0270 & 0.5580 & 0.4150 \\
		0.7360 & 0.2490 & 0.0150
	\end{bmatrix} \quad \text{and} \quad
	P(1) =
	\begin{bmatrix}
		0.7180 & 0.2540 & 0.0280 \\
		0.3470 & 0.0970 & 0.5560 \\
		0.0150 & 0.9560 & 0.0290
	\end{bmatrix}.
\end{equation*}
In addition, the budget constraint is $d = 0.5$ and the rewards are given by
\begin{equation*}
	r =
	\begin{bmatrix}
		0 & 0 & 0 \\
		0.6990 & 0.3620 & 0.7150
	\end{bmatrix}.
\end{equation*}

The transition probability matrices for other example of Section \ref{subsub: counterexamples for whittle index and lp-priority policies} are:
\begin{equation*}
	P(0) =
	\begin{bmatrix}
		0.0223 & 0.1023 & 0.8754 \\
		0.0343 & 0.1718 & 0.7940 \\
		0.5232 & 0.4552 & 0.0215
	\end{bmatrix} \quad \text{and} \quad
	P(1) =
	\begin{bmatrix}
		0.1487 & 0.3044 & 0.5469 \\
		0.5685 & 0.4112 & 0.0204 \\
		0.2527 & 0.2731 & 0.4742
	\end{bmatrix}.
\end{equation*}
Also, the budget constraint is $d = 0.4$ and the rewards are given by
\begin{equation*}
	r =
	\begin{bmatrix}
		0 & 0 & 0 \\
		0.3740 & 0.1174 & 0.0787
	\end{bmatrix}.
\end{equation*}

\end{appendices}
	
\newcommand{\noop}[1]{}
\bibliographystyle{IEEEtranS}
\bibliography{mdp,phd}
	
\end{document}